\documentclass{article} \usepackage{amsthm,amsmath,amssymb,mathrsfs,url,amsfonts} 
\usepackage{amsmath, verbatim, amsfonts, amssymb} \usepackage{mathrsfs}

\usepackage{amsfonts}\usepackage{amssymb} \usepackage{stackrel} 
\usepackage{bm}
\usepackage{latexsym}\usepackage{epsfig}\usepackage{graphicx}\usepackage{oldgerm}

\newcommand\DN{\newcommand} 
 
%%% osadasty
\numberwithin{equation}{section}
\newcounter{Const} \setcounter{Const}{0}
\DN\Ct{\refstepcounter{Const}c_{\theConst}}%\label{#1}
\DN\cref[1]{c_{\ref{#1}}}	
 \theoremstyle{definition}

 \newtheorem{theorem}{Theorem}[section]
 \newtheorem{lemma}[theorem]{Lemma}

\DN\lref[1]{Lemma~\ref{#1}}\DN\tref[1]{Theorem~\ref{#1}}\DN\pref[1]{Proposition~\ref{#1}}
\DN\sref[1]{Section~\ref{#1}}
\DN\ssref[1]{Subsection~\ref{#1}}
\DN\dref[1]{Definition~\ref{#1}} 
\DN\rref[1]{Remark~\ref{#1}} 
\DN\corref[1]{Corollary~\ref{#1}}
\DN\eref[1]{Example~\ref{#1}}

\DN\bs{\bigskip}\DN\ms{\medskip}
\DN\N{\mathbb{N}}\DN\R{\mathbb{R}}\DN\Q{\mathbb{Q}}\DN\C{\mathbb{C}}
\DN\Z{\mathbb{Z}}

\DN\map[3]{#1\!:\!#2\!\to\!#3}
\DN\ot{\otimes} %\DN\ts{\times }
\DN\PD[2]{\frac{\partial#1}{\partial#2}}
\DN\half{\frac{1}{2}}
\DN\elaw{\stackrel{\mathrm{law}}{=}}
\DN\eac{\stackrel{\mathrm{ac}}{\sim}}

\DN\Rd{\R ^d}
\DN\RtwoN{\R ^{2\N }}
\DN\Rtwo{\R ^2}
\DN\Rtwom{\R ^{2m}}
\DN\RZ{\R ^{\Z }}

\DN\RdN{\R ^{d\N }} 

\DN\limi[1]{\lim_{#1\to\infty}} \DN\limz[1]{\lim_{#1\to0}}
\DN\limsupi[1]{\limsup_{#1\to\infty}}\DN\liminfi[1]{\liminf_{#1\to\infty}} 	
\DN\limsupz[1]{\limsup_{#1\to 0}}\DN\liminfz[1]{\liminf_{#1\to 0}}

\DN\LRm{L^2 (\SSSR , \Deltam d\x )}
\DN\ERxim{\bm{\E } _{\Rxi }^{\m }}
\DN\ERxi{\bm{\E } _{\Rxi }} 

\DN\Jt{J _t}
\DN\Lmlocone{L_{\mathrm{loc}}^{1} (\R \times \sSS , \muone )}
\DN\Lmloctwo{L_{\mathrm{loc}}^{2} (\R \times \sSS , \muone )}

\DN\mmm{\mm \Rxim }
\DN\Lmmm{L^2 (\SSSR, \mmm d\x )}
%%%%%%%%%%%%%%%%%%%%%%%%%%%%%%%%%%%%%%%

\DN\domWR{\bm{{\dom }}_{\rR }^m }
\DN\pWR{{p}_{\rR }^m }
\DN\EWRD{{\bm{\E }}_{\rR }^m }
\DN\TW{\bm{T} _{\rR }^{m}}

%%%%%%%%%%%%%%%%%%%%%%%%%%%%%%%%%%

\DN\domlRxim{\bm{\dom }\Rxim }
	\DN\dlRxim{\doml \Rxim }
\DN\domlRxi{\doml _{\Rxi }}

\DN\domRxim{\dom \Rxim }

\DN\Rxi{\rR , \xi }\DN\Rxim{_{\Rxi }^{\m }}
\DN\Rximxi{_{\Rxi }^{\mxi }}
\DN\Rpixi{\rR , \piRc (\xi )}

\DN\fRxi{f_{\Rxi }}
\DN\gRxi{g_{\Rxi }}
\DN\TTR{ \mathscr{T}_{\rR }}
\DN\TT{\mathscr{T} }
\DN\Tl{\underline{T}}
\DN\Tlt{\Tl (t)}
\DN\Tlu{\Tl (u)}
\DN\TlR{\Tl _{\rR }}
\DN\TlRt{\TlR (t)}
\DN\TlRu{\TlR (u)}
\DN\Tt{T (t)} \DN\Tu{T (u)}

%	\DN\TRxim{\Tl \Rxim }
\DN\TRxim{\bm{T } \Rxim }

\DN\Deltam{\Delta ^m}

%	\DN\Ro{\rR < }
\DN\Ro{\rR }
\DN\SSSR{\mathbf{S}_{\Ro }^{\m }}
\DN\SSSRone{\mathbf{S}_{\Ro }^{1 }}
\DN\mm{\mathbf{m}}
%2022/1/18
\DN\limR{\limi{\rR }}
\DN\DDDR{\mathbb{D}_{\rR }}
\DN\DDDRO{\mathbf{D}_{\Ro }}
\DN\f{\check{f}} \DN\g{\check{g}} 
\DN\si{s^i} \DN\sj{s^j}
\DN\sii{s^{i+1}} \DN\sjj{s^{j+1}}
\DN\ER{\E _{\rR }}
 \DN\ERl{\underline{\E }_{\rR }}

\DN\As[1]{$ ($\textbf{#1}$)$} 

\DN\AAAA{\mathscr{A}}
\DN\BBBB{\mathscr{B} }\DN\BB{\mathfrak{B}}
\DN\AAA{\mathfrak{A}}

\DN\Am{\mathbf{A}_0^{\m }} \DN\Bm{\mathbf{B}_t^{\m }}
\DN\Ams{\mathbf{A}_0^{\m * }} \DN\Bms{\mathbf{B}_t^{\m * }}

\DN\m{m}\DN\nnn{n}
\DN\mxi{\m (\RxiC )} 
\DN\xiR{\xi , \rR }
\DN\RxiC{\piRc (\xi )}
\DN\1{\El _{\Rxi }^{m }}

\DN\3{\El _{\rR } }

\DN\5{1_{\Ams }(\wms _0)} 
\DN\Bmsws{1_{\Bms }(\wms _t)} 
\DN\6{1_{\Ams }(\wms _0 ) 1_{\Bms }(\wms _t )}
\DN\ABm{\w _0 \in \Am , \w _t \in \Bm }
\DN\wA{\w _0 \in \mathbf{A}_0} \DN\wB{ \w _t \in \mathbf{B}_t}
\DN\7{\wA ,\wB }

	\DN\pPi{ \pP ^{\infty}}
	\DN\pPix{\pPi _{\x }}

	\DN\sumjm{\sum_{ \jmm }}

%% ここは、新しい記号
\DN\X{\mathbf{X}}
\DN\Xm{\X ^{ \m }}

\DN\YmX{(\mathbf{Y}^{\m } , \X ^{\m *})}
\DN\XmX{(\X ^{\m } , \X ^{\m *})}
\DN\Xms{ \X ^{\m *} }

\DN\XM{\X ^{[\m ]}}

\DN\XX{\mathfrak{X}}
\DN\xx{\mathfrak{x}}
\DN\xxt{\xx _t}
\DN\XXms{\XX ^{\m *}}
\DN\wz{\w _0}
\DN\w{\mathbf{w}} \DN\wm{\w ^{ \m }} \DN\wms{\w ^{ \m * }} \DN\wmm{\w ^{[\m ]}}
\DN\x{\mathbf{x}} 
\DN\xm{\x } 	%% \DN\xm{\x ^{ \m }}

\DN\xms{\x ^{ \m *}}
\DN\y{\mathbf{y}} \DN\ym{\y ^{ \m }}

\DN\sss{\mathfrak{s}}
\DN\K{\mathrm{K}}

%[] 3章
\DN\bwm{b _{\w }^{ \m }} \DN\bwmi{b _{\w }^{ \m , i }}

\DN\OTwm{ \mathscr{O}_{T,\w }^{ \m }}
\DN\OTwme{ \mathscr{O}_{T,\w }^{ \m , \epsilon }}
\DN\OTwmz{ \mathscr{O}_{T,\w }^{ \m , 0 }}
\DN\OTwmee{ \mathscr{O}_{T,\w }^{ \m , \epsilon /2}}
\DN\QTwme{ \mathscr{Q}_{T,\w }^{ \m , \epsilon }}

%[] 4章
\DN\RZZ{\R _< ^{\mathbb{Z}}} 
\DN\Rm{\R _< ^{\m }} \DN\Rms{\R _<^{ \m *}} \DN\Rmm{\R _<^{[\m ]}}
\DN\W{\mathrm{W}}
\DN\WW{C(\R ; \sSS )}
\DN\ww{\mathfrak{w}}
\DN\wwz{\mathfrak{w}_0}
\DN\wwt{\mathfrak{w}_t}

%%% ここからIFCより記号のコピー 無駄なものもたくさんある

\DN\Pm{\pP ^{\m }}
\DN\Pmx{\Pm _{\x }}
\DN\Pwm{\pP _{\w }^{\m }}
%	\DN\Pxwm{\pP _{\x , \w }^{\m }}

\DN\PP{\mathsf{P}}
 \DN\PPm{\PP _{\mu }}
\DN\PPx{\PP _{\xx }}
\DN\PPwm{\PP _{\w }^{\m }}

\DN\mul{\mu \circ {\lab }^{-1}}
\DN\mui{\mu ^{\infty}} 

\DN\muR{\mu \circ \piR }
\DN\muRxim{\mu _{\Rxi }^{\m }}
\DN\muRximxi{\mu _{\Rxi }^{\mxi }}
\DN\muRxi{\mu _{\Rxi }}

\DN\piRc{\pi _{\rR }^c}
\DN\piR{\pi _{\rR }}

	\DN\nN{N}
	\DN\pP{P}
	\DN\qQ{Q}

\DN\rR{R} 
\DN\sS{S} \DN\sSS{\mathfrak{S}}

\DN\pPm{\pP _{\mu }}
\DN\pPs{\Pts }
 \DN\KK{\mathcal{K} _{\mathrm{sin}, 2}}

 \DN\muone{\mu ^{[1]}}

\DN\Sm{\sS ^{ \m }} \DN\SmSS{\Sm \ts \sSS }

\DN\SSrm{\sSS _r^{ \m }}
\DN\SSRm{\SSR ^{ \m }}
\DN\SSR{\sSS _{\rR }}

\DN\Ssi{\sSS _{\mathrm{s,i}}}

\DN\SR{\sS _{\rR }}
\DN\SRm{\sS _{\rR }^{ \m }}

\DN\dlog{\mathfrak{d}}
\DN\dmu{\dlog ^{\mu }}

\DN\dR{\dom _{\rR }}
\DN\dRl{\underline{\dom }_{\rR }}
\DN\dom{\mathcal{D}} 
\DN\doml{\underline{\dom }} 
\DN\di{\dom _{\circ }}
\DN\diR{\dom _{\rR ,\circ }} 
\DN\Lm{L^2(\sSS ,\mu )}
\DN\Emu{\mathcal{E}^{\mu } }
\DN\E{\mathcal{E} }
\DN\El{\underline{\E }}

%Sec. 2
\DN\ulab{\mathfrak{u} }
\DN\lab{\mathfrak{l} } 

\DN\labm{\lab _{\m } } 
\DN\labi{\lab ^i}
\DN\labR{\lab _{\rR }^{\m }}

\DN\ulabm{\x ^{[\m ]}}

\DN\upath{\ulab _{\mathrm{path}}}
\DN\lpath{\lab _{\mathrm{path}}}

\begin{document}
\title{Ergodicity of unlabeled dynamics of Dyson's model in infinite dimensions} 

\author{Hirofumi Osada and Shota Osada}

\maketitle

\pagestyle{myheadings}%\pagestyle{plain}
\markboth{Hirofumi Osada; Shota Osada}
{Ergodicity of unlabeled dynamics of Dyson's model in infinite dimensions}
\maketitle

\begin{abstract} 
Dyson's model in infinite dimensions is a system of Brownian particles 
that interact via a logarithmic potential with an inverse temperature of $ \beta = 2$. 
The stochastic process can be represented by the solution to an infinite-dimensional stochastic differential equation. 
The associated unlabeled dynamics (diffusion process) are given by the Dirichlet form with the sine$ _2$ point process as a reference measure. 
In a previous study, we proved that Dyson's model in infinite dimensions is irreducible, but left 
the ergodicity of the unlabeled dynamics as an open problem. 
In this paper, we prove that the unlabeled dynamics of Dyson's model in infinite dimensions are ergodic. 
\bs 

\noindent \small 
\textsf{Keywords: Dyson's model, random matrices, ergodicity, diffusion process, interacting Brownian motion, infinite-dimensional stochastic differential equations, logarithmic potential, Gaussian unitary ensembles}

\ms 
\noindent 
\textbf{MSC2020: 60B20, 60H10, 60J40, 60J60, 60K35}
\end{abstract}

\section{Introduction} \label{s:1}

Dyson's Brownian motion is given by the solution to the following finite-dimensional stochastic differential equation (SDE): 
\begin{align}\label{:10a}&
 X_t^{\nN ,i} - X_0^{\nN ,i} = B_t^i + 
 \frac{\beta }{2}
 \int_0^t
\sum_{ j\not= i}^{\nN }
 \frac{1}{X_u^{\nN ,i} - X_u^{\nN ,j}} du - 
 \frac{\beta }{2\nN }
%\frac{1}{2\nN }
 \int_0^t
 \frac{1}{X_u^{\nN ,i}} du 
\end{align}
 for $ \beta = 1,2,4 $. 
If $ \beta = 2$, then SDE \eqref{:10a} describes the dynamics of the eigenvalues of Gaussian unitary ensembles of order $ \nN \in \N $ \cite{Dys62,Meh04}. 
By taking $ \beta = 2 $ and letting $ \nN \to \infty $ in Eq.\,\eqref{:10a}, 
Spohn introduced the infinite-dimensional stochastic differential equation (ISDE)
\begin{align}& \label{:10b} 
 X_t^i - X_0^i = B_t^i + \int_0^t \sum_{ j\not= i , \, j \in \Z } 
 \frac{1}{X_u^i - X_u^j } du \quad (i\in\mathbb{Z})
.\end{align}
Spohn called ISDE \eqref{:10b} \lq\lq Dyson's model'' (of interacting Brownian particles) 
\cite{Spo87,Spo.tracer}. 
To emphasize that there are an infinite number of dimensions, we call this ISDE \lq\lq Dyson's model in infinite dimensions.''

Dyson's model in infinite dimensions with an inverse temperature of $ \beta \ge 0 $ 
is an $ \RZ $-valued stochastic process of Brownian particles interacting via a logarithmic potential. 
The stochastic process is given by the ISDE 
\begin{align}& \label{:10e} 
 X_t^i - X_0^i = B_t^i + 
 \frac{\beta }{2} \int_0^t
 \lim_{\rR \to\infty }\sum_{|X_u^i - X_u^j |< \rR , \ j\not= i}^{\infty} 
 \frac{1}{X_u^i - X_u^j } du \quad (i\in\mathbb{Z})
.\end{align}
Because the number of particles is infinite, the meaning of the sum in the drift term is ambiguous. 
The long-range nature of the logarithmic interaction indicates that the sum represents the conditional convergence of the one-reduced Campbell measure. 
We formulate ISDE \eqref{:20a} in a strict sense using the concept of the logarithmic derivative 
$ \dmu $. (See \cite{o.isde} for further details.)

Spohn \cite{Spo87} constructed the limit dynamics 
as the $ L^2 $-Markovian semi-group given by the Dirichlet form on $ \Lm $, defined as 
\begin{align}\label{:10c}&
\mathcal{E} ( f , g ) = \int _{\sSS } \mathbb{D} [ f , g ] d\mu 
,\end{align}
where $ \sSS $ is the configuration space over $ \R $, $ \mu $ is the sine$ _{2}$ random point field, and $ \mathbb{D} $ is the standard carr\'{e} du champ on $ \sSS $ such that 
\begin{align}& \notag %\label{:10d}&
\mathbb{D} [f , g ] (\sss )= \half \sum_i \PD{\check{f}}{s^i} \PD{\check{g}}{s^i} 
.\end{align}
Here, for a function $ f (\sss ) $ on $ \sSS $, 
$ \check{f} (\mathbf{s}) $ is the symmetric function on 
$ \sum_{m=0}^{\infty} \R ^m $ such that 
$ \check{f} (\mathbf{s}) = f (\sss )$, $ \sss = \sum_i \delta_{s^i}$. 
Furthermore, the domain of the Dirichlet form is taken to be the closure of the polynomials on 
$ \sSS $. 
The sine$ _{2}$ random point field $ \mu $ is 
a determinantal random point field on $ \R $ 
whose $ \m $-point correlation function $ \rho ^{ \m }$ 
with respect to the Lebesgue measure is given by 
\begin{align} &\notag % \label{:-10d}&
\rho ^{ \m }(\mathbf{x})= \det [ \KK ( x^i , x^j ) ] _{ i , j = 1 }^{ \m }
.\end{align}
Here, for a constant $ 0 < \rho < \infty $, we set the sine kernel $ \KK $ such that 
\begin{align} \label{:10f}&
 \KK ( x , y ) = \frac{\sin \{\rho (x-y)\} }{\pi (x-y)}
.\end{align}
Spohn \cite{Spo87} proved the closability of $ \mathcal{E} $ on $ \Lm $ with a predomain consisting of polynomials on $ \sSS $. 

In \cite{o.dfa,o.rm}, the first author proved that 
$ (\mathcal{E}, \di ^{\mu })$ is closable on $ \Lm $, 
and that its closure $ (\mathcal{E}, \dom )$ is a quasi-regular Dirichlet form. 
Here, $ \di $ is the set consisting of local and smooth functions on $ \sSS $. 
We take $ \di ^{\mu }$ such that 
\begin{align} & \notag 
\di ^{\mu } = \{ f \in \di \, ;\, \mathcal{E} ( f , f ) < \infty ,\, f \in \Lm \} 
.\end{align}
Thus, the $ L^2$-Markovian semi-group was constructed alongside the diffusion 
\begin{align} & \label{:10g}
 \XX _t = \sum_{i \in \mathbb{Z}} \delta _{X^i _t }
\end{align}
associated with the Dirichlet form $ (\mathcal{E}, \dom )$ on $ \Lm $. 
We call $ \XX $ the unlabeled dynamics or unlabeled diffusion 
because the state space of the process is $ \sSS $. 

Let $ \mu _{\beta }$ be the sine$ _{\beta }$ random point field \cite{v-v}. 
Replacing $ \mu $ by $ \mu _{\beta }$ ($ \beta = 1,4$), 
we consider the Dirichlet form $ \mathcal{E} $ in Eq.\,\eqref{:10c} for $ \beta =1,4$. 
The unlabeled diffusion has been constructed for $ \beta = 1,4$ \cite{o.rm}, and 
the associated labeled process $ \X = (X^i)_{i\in \N }$ 
satisfies ISDE \eqref{:10e} for $ \beta = 1,2,4$ \cite{o.isde}. 
These cases have been proved as examples of the general theory developed in various papers \cite{o.tp,o.isde,o.rm,o.rm2}. 

In \cite{o.isde}, the meaning of a solution to an ISDE is a weak solution; 
the uniqueness of such a weak solution and its Dirichlet form were left open in \cite{o.isde,o.rm}. 
A weak solution $ (\X ,\mathbf{B})$ can be loosely described as a pair consisting 
 of the stochastic process $ \X $ and the Brownian motion $ \mathbf{B} $ satisfying 
the ISDE. 
A strong solution is a weak solution $ (\X ,\mathbf{B})$ 
such that $ \X $ is a function of the Brownian motion $ \mathbf{B} $ 
and the initial starting point $ \x $. 
(See \cite{IW,o-t.tail} for the concept of strong and weak solutions of SDEs.) 

Tsai \cite{tsai.14} solved ISDE \eqref{:10e} for all $ \beta \in [1,\infty )$. 
He proved the existence of a strong solution and the path-wise uniqueness of this solution. 
The method used by Tsai depends on an artistic coupling specific to Dyson's model. 
A non-equilibrium solution is obtained in the sense that the ISDE is solved by starting at each point 
in an explicitly given subset $ \sSS _0 \subset \sSS $ such that $ \mu (\sSS _0 ) = 1 $. 

The $ \mu $-reversibility of the associated unlabeled diffusion was left open in \cite{tsai.14}. 
Combining \cite{o.isde} and \cite{tsai.14}, we find that the unlabeled process given by the solution of Eq.\,\eqref{:10b} obtained in \cite{tsai.14} is reversible with respect to $ \mu _\beta $ for $ \beta = 1, 4$. 
For a general $ \beta > 0 $, we expect that the reversible probability measure of the unlabeled diffusion given by the solution to ISDE \eqref{:10e} is the sine$_{\beta } $ random point field. 
This remains an open problem, except for $ \beta = 1,2,4 $ \cite{o.rm}.

The first author and Tanemura \cite{o-t.tail} also proved the existence of a strong solution and the path-wise uniqueness of this solution for $ \beta = 1,2,4$. 

Using the result in \cite{o-t.tail}, Kawamoto {\em et al.} proved the uniqueness of Dirichlet forms \cite{k-o-t.udf}. 
%Their result is the key to the proof of ergodicity in the present paper (see \lref{l:46}). 
In proving the uniqueness of Dirichlet forms, they examined the condition of an infinite system of finite-dimensional SDEs with consistency \cite{k-o-t.ifc}, 
which plays an important role in the theory developed in \cite{o-t.tail}. 
Kawamoto and the first author derived a solution to the ISDE based on $ \nN $-particle systems \cite{k-o.fpa,k-o.sg,k-o.du}. 

\bs 

In the remainder of this paper, we consider the case $ \beta = 2$. 
Hence, we take $ \mu $ to be the sine$ _2$ random point field. 

In \cite{o-tu.irr}, the first author and Tsuboi proved that the labeled process $ \X $ is irreducible (see \lref{l:21}). 
Thus, it is natural to consider the existence of the invariant probability measure $ \nu $ for the labeled process $ \X $ and the ergodicity of the stationary labeled process $ \X _0 \elaw \nu $. 
These two problems were left open in \cite{o-tu.irr}.

Let $ \ulab $ be the unlabeling map such that 
$ \ulab ( \mathbf{s} ) = \sum_{i\in\mathbb{Z}} \delta_{\si }$ for $ \mathbf{s}=(s^i)$. 

In \cite[Proposition]{Spo.tracer}, Spohn proved that tagged particles exhibit logarithmic asymptotic behavior as $ t \to \infty $ 
such that, with the constant one-point correlation function $ \rho $ in Eq.\,\eqref{:10f}, 
\begin{align}\label{:10m}&
\limi{t} \frac{1}{ \log t } E [|X_t^i-X_0^i|^2] = (\pi \rho )^{-2}
.\end{align}
This result suggests that there is no invariant probability measure $ \nu $ of $ \X $ satisfying $ \mu = \nu \circ \ulab ^{-1}$, 
which implies that $ \X $ is not ergodic in the sense that $ \X $ has no invariant probability measure. 
Hence, we consider the ergodicity of the unlabeled diffusion $ \XX $ in Eq.\,\eqref{:10g} 
associated with $\X $. 

Let $ \X = (X^i)_{i\in\Z }$ be a solution of \eqref{:10e}. 
Let $ \XX $ be such that $ \XX _t = \sum_{i\in\Z }\delta_{X_i^i}$. 
The goal of this paper is to prove that the $ \mu $-reversible diffusion $ (\XX ,\PPm ) $ 
associated with $ \X $ is ergodic under a time shift (\tref{l:14}). 
To prove this, we show that %$ \XX $ is irreducible (\tref{l:11}), that 
an $ \E $-harmonic function is constant (\tref{l:12}), 
and that $ \mu $ is extremal in the space of invariant probability measures of $ \XX $ (\lref{l:51}). 

\bs 

By definition, the configuration space $ \sSS $ over $ \mathbb{R}$ is given by 
\begin{align} &\notag % \label{:-11m}&
\sSS = \Big\{ \sss = \sum _i \delta_{\si }\, ;\, \sss ( K ) < \infty \ \text{ for any compact } K \Big\} 
.\end{align}
We endow $ \sSS $ with the vague topology. 
Under the vague topology, $ \sSS $ is a Polish space. 
A probability measure on $ (\sSS , \mathscr{B}(\sSS ) )$ is called a random point field. Let 
\begin{align} &\notag % &\label{:-11n}
\Ssi = \Big\{ \sss \in \sSS \, ;\, \sss (\{ s \} ) \le 1 \text{ for all } s \in \R ,\, \sss (\R ) = \infty \Big\} 
.\end{align}
In \cite{o.col,o.rm}, we proved that the sine$_2 $ random point field $ \mu $ satisfies 
\begin{align}\label{:11m}&
\mathrm{Cap} ((\Ssi )^c) = 0 
,\end{align}
where $ \mathrm{Cap}$ denotes the capacity given by the Dirichlet form 
$ (\mathcal{E}, \dom )$ on $ \Lm $. 
The result has a dynamic interpretation. 
Indeed, using Dirichlet form theory, we can deduce from Eq.\,\eqref{:11m} that, for $ \mu $-a.s.\,$ \xx $, 
\begin{align}\label{:11n}&
\PPx (\ww _t \in \Ssi \text{ for all }t ) = 1 
.\end{align}
Here $ \{ \PPx \} $ is the diffusion associated with the Dirichlet form 
$ (\mathcal{E}, \dom )$ on $ \Lm $, and $ \ww =\{ \wwt \} \in C([0,\infty), \sSS )$. 

We write $ \mathbf{s}=({\si })_{i\in\mathbb{Z}} \in \R ^{\mathbb{Z}}$, and set 
\begin{align} &\notag % \label{:-11o}&
\RZZ = \{ \mathbf{s}=({\si })_{i\in\mathbb{Z}}\in \R ^{\mathbb{Z}} \, ;\,{\si }< {\sii } 
\ \text{ for all }\ i \, \} 
.\end{align}
Let $ \map{\lab }{\Ssi }{\RZZ } $ be a function such that $ \ulab \circ \lab = \mathrm{id.}$. 
We call $ \lab $ a labeling map. Note that many labeling maps exist; in the remainder of this paper, we fix a labeling map $ \lab $.

Let $ \X = ( X^i)_{i\in \mathbb{Z} }$ be a solution to ISDE \eqref{:10e} 
with $ \beta = 2 $ defined on a filtered space $ (\Omega , \mathscr{F}, P , \{ \mathscr{F}_t \} )$ 
such that $ \X _0 \elaw \mul $. 
Then it is known \cite{o-t.tail} that%, for $ \x = \lab (\xx ) $, 
\begin{align*}&
 \pP (\upath (\X ) \in \cdot | \X = \x ) = \PPx 
\quad \text{ for $ \x = \lab (\xx ) $}
.\end{align*}
Here $ \upath $ is the unlabel path map given by $ \upath (\w )_t = \sum_i \delta_{w_t^i}$, $ \w = (w^i)$, and $ \{ \PPx \}$ is the diffusion 
associated with the Dirichlet form $ (\E , \dom )$ on $ \Lm $ 
as in \eqref{:11n}. We set 
\begin{align*}&
 \PPm = \int _{\sSS } \PPx d\mu 
.\end{align*}
Then, $ \PPm $ is a probability measure on $ C([0,\infty);\sSS ) $. 
The probability measure $ \PPm $ defines the $ \mu $-reversible diffusion on $ \sSS $. 
From Eq.\,\eqref{:11n}, note that the state space of the diffusion $( \XX , \PPm ) $ is restricted on $ \Ssi $. 
% 
% \begin{theorem}\label{l:11}
% $ \{ \PPx \} $ is irreducible. That is, if 
% $ \AAA $ and $ \BB \in \mathscr{B}(\sSS ) $ satisfy 
%% \9 
% \begin{align}\label{:11a}& 
% \PPm ( \wwz \in \AAA ,\, \ww _t \in \BB ) = 0 
% ,\end{align}
% then $\PPm ( \wwz \in \AAA ) = 0 $ or $\PPm ( \ww _t \in \BB ) = 0 $. 
% \end{theorem}

Let $ \Tt $ be the semi-group on $ \Lm $ associated with the Dirichlet form $ (\E , \dom )$. 
Clearly, $ \Tt 1 = 1 $ for all $ t $ because $ \Tt $ is $ \mu $-reversible. 
We present the inverse of this fact. 
We show that the Dirichlet form $ (\E , \dom )$ and the semi-group $ \Tt $ are ergodic in the following sense. 
\begin{theorem}	\label{l:12} 
\thetag{1} If $ f \in \dom $ and $ \E (f , f ) = 0 $, then $ f $ is constant $ \mu $-a.s. 
\\\thetag{2} 
If $ f \in \Lm $ is such that $ \Tt f = f $ $ \mu $-a.s.\,for all $ t $, 
then $ f $ is constant $ \mu $-a.s. 
\end{theorem}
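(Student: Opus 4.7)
The plan is first to note that (1) and (2) are equivalent, and then to reduce (2) to the irreducibility of the labeled dynamics proved in \cite{o-tu.irr} (\lref{l:21}). For the equivalence, I will apply the spectral theorem to the self-adjoint non-negative generator $A$ of the $\mu$-reversible semigroup $\Tt$: since $\dom = \mathrm{Dom}(A^{1/2})$ and $\E(f,f) = \|A^{1/2}f\|_{\Lm}^{2}$, both conditions $\E(f,f) = 0$ and $\Tt f = f$ for all $t \ge 0$ are equivalent to $f \in \ker A$. I therefore focus on (2).

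Assume $f \in \Lm$ satisfies $\Tt f = f$ $\mu$-a.s.\,for all $t \ge 0$. Reversibility yields $E^{\PPm}[(f(\XX_t) - f(\XX_0))^{2}] = 2\|f\|_{\Lm}^{2} - 2\langle f, \Tt f\rangle_{\Lm} = 0$, so $f(\XX_t) = f(\XX_0)$ $\PPm$-a.s.\,for each fixed $t$. Running this over a countable dense set of times and restricting paths to $\Ssi$ via \eqref{:11n}, I obtain that, for $\mu$-a.e.\,$\xx$, $f(\wwt) = f(\xx)$ for all rational $t \ge 0$, $\PPx$-a.s. Set $g = f \circ \ulab$ on $\RZZ$. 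The pathwise identification $\pP(\upath(\X) \in \cdot \mid \X_0 = \x) = \PPx$, with $\xx = \ulab(\x)$, together with $\X_0 \elaw \mul$, then gives $g(\X_t) = g(\X_0)$ for all rational $t$, $\pP$-a.s.: the functional $g(\X_0)$ is invariant along the labeled process.

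To finish, I will invoke \lref{l:21}: irreducibility of $\X$ will force $g(\X_0)$ to be $\pP$-a.s.\,equal to some constant $c$, and then $f = c$ $\mu$-a.s.\,follows from $\mu = \mul \circ \ulab^{-1}$. The principal obstacle lies precisely in this last step. Because $\X$ has no invariant probability measure---consistent with Spohn's logarithmic tagged-particle behavior \eqref{:10m}---one cannot invoke the usual $L^{2}$-ergodic equivalence between semigroup irreducibility and triviality of the trajectory-invariant $\sigma$-algebra in its standard stationary form. Instead, the deduction from ``trajectory-invariant'' to ``$\pP$-a.s.\,constant'' must be performed under the annealed law $\pP$ with $\X_0 \elaw \mul$, and must carefully handle the exceptional sets that arise from the labeling correspondence and from the confinement to $\Ssi$ in \eqref{:11n}. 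A minor subsidiary check is that $g$ defines an element of $L^{2}(\RZZ, \mul)$, which is automatic from $\mu = \mul \circ \ulab^{-1}$ and $f \in \Lm$.
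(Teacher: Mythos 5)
Your route is genuinely different from the paper's. The paper does not use the irreducibility result of \cite{o-tu.irr} (\lref{l:21}) to prove \tref{l:12}; instead it works with the lower Dirichlet form $ (\El , \doml )$ of \lref{l:43}, proves ergodicity of the conditioned finite-volume forms via the quasi-Gibbs bound of \lref{l:31} (\tref{l:35}), upgrades this to $ (\El , \doml )$ using number rigidity (\lref{l:42}) and tail triviality (\lref{l:44}), and finally invokes $ (\El , \doml ) = (\E , \dom )$ (\lref{l:46}). Within your scheme, the reduction of statement (1) to statement (2) by spectral calculus is correct (the paper goes in the opposite direction with the standard energy identity, which amounts to the same), and the step from $ \Tt f = f $ to $ f (\XX _t ) = f (\XX _0 )$ $ \PPm $-a.s.\,for each fixed $ t $ is also correct. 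There is no circularity in quoting \lref{l:21}, since \cite{o-tu.irr} proves irreducibility independently of the present ergodicity statement.

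However, as written your argument stops exactly at the decisive point: you announce that irreducibility \emph{will} force $ g (\X _0 )$, $ g = f \circ \ulab $, to be constant, and then declare this deduction to be the principal obstacle without carrying it out; but that deduction \emph{is} the theorem, so this is a genuine gap in the write-up. The obstacle you fear (no invariant probability measure for the labeled process) is in fact not in the way, because it suffices to apply \lref{l:21} to label-symmetric events. Fix a Borel version of $ f $ and a fixed $ t > 0 $. If $ f $ were not $ \mu $-a.s.\,constant, choose $ c $ with $ 0 < \mu ( f < c ) < 1 $ and set $ \mathbf{A} = \{ \x \in \RZZ \, ;\, g (\x ) < c \} $, $ \mathbf{B} = \{ \x \in \RZZ \, ;\, g (\x ) \ge c \} $. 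Since $ f (\XX _t ) = f (\XX _0 )$ a.s., the event $ \{ \wz \in \mathbf{A} , \w _t \in \mathbf{B} \} $ is contained in $ \{ g (\w _0 ) \ne g (\w _t ) \} $ and hence is $ \pPi $-null, so \lref{l:21} yields $ \pPi ( \wz \in \mathbf{A} ) = 0 $ or $ \pPi ( \w _t \in \mathbf{B} ) = 0 $. But $ \pPi ( \wz \in \mathbf{A} ) = \mul ( g < c ) = \mu ( f < c ) > 0 $ because $ g \circ \lab = f $, and $ \pPi ( \w _t \in \mathbf{B} ) = \pP ( f ( \ulab (\X _t ) ) \ge c ) = \mu ( f \ge c ) > 0 $ because $ \ulab (\X _t ) = \XX _t $ has law $ \mu $ by stationarity of the unlabeled process; no invariant measure of the labeled process is needed. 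This contradiction closes your proof, modulo the routine points you already flag (Borel version of $ f $, confinement to $ \Ssi $ via \eqref{:11n}, and non-collision so that $ \X _t \in \RZZ $). Once completed in this way, your argument is shorter than the paper's, but it rests entirely on the deep irreducibility theorem of \cite{o-tu.irr}, whereas the paper's proof is built from structural properties of $ \mu $ (quasi-Gibbs estimate, number rigidity, tail triviality) together with the Dirichlet-form identity of \lref{l:46}, and along the way establishes the intermediate result \tref{l:45} for $ (\El , \doml )$.
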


Because of reversibility, we extend $ \PPm $ to the probability measure on 
$ \WW $ and denote it by the same symbol $ \PPm $. 
Let $ \map{\theta_t}{\WW }{\WW }$ be the shift such that 
$ \theta_t (\ww ) = \ww (\cdot + t )$. 
Then, $ \PPm $ is the invariant probability measure of $ \theta_t $. 
\begin{theorem}	\label{l:14}
$ \PPm $ is ergodic under the shift $ \theta_t $. 
That is, for any $ A \in \mathscr{B}( \WW ) $ such that 
$ \theta_t (A) = A $ for all $ t \in \R $, it holds that $ \PPm ( A ) \in \{ 0 , 1 \} $. 
\end{theorem}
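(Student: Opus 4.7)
The plan is to deduce Theorem~\ref{l:14} from Theorem~\ref{l:12}(2) by the classical route for reversible Markov processes: the $L^2$-ergodicity of the $\mu$-symmetric semigroup $\Tt$ yields strong mixing of the stationary process $(\XX ,\PPm )$, and mixing immediately implies shift-ergodicity.

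Concretely, I would proceed in four steps. First, I would extend $\PPm $ from $C([0,\infty );\sSS )$ to $\WW $. Reversibility makes the finite-dimensional distributions symmetric under time reversal, so Kolmogorov extension produces a unique $\theta _t$-invariant probability measure on $\WW $, still denoted $\PPm $. Second, writing the $\mu $-symmetric strongly continuous contraction semigroup as $\Tt =e^{-tA}$ for the non-negative self-adjoint generator $A$, Theorem~\ref{l:12}(2) states $\ker A=\R \cdot 1 $, and the spectral theorem then gives
\begin{align*}
\Tt g \ \longrightarrow \ \int g\, d\mu \quad \text{in }\Lm \text{ as }t\to \infty ,\quad g\in \Lm ,
\end{align*}
by dominated convergence applied to the spectral measure. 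Third, this yields the one-step mixing identity $E_{\PPm }[f(\ww _0) g(\ww _t)]=\langle f,\Tt g\rangle _{\Lm }\to \int f\, d\mu \int g\, d\mu $, and a Markov-property argument bootstraps it to all cylinder functionals: for $F$ supported on times $[-n,n]$ and $G$ supported on $[-n',n']$, conditioning $G\circ \theta _t$ on $\ww _n$ via the Markov property rewrites it as $(\Tu [H])(\ww _n)$ (with $u=t-n-n'$) for some bounded $H\in \Lm $ satisfying $\int H\, d\mu =E_{\PPm }[G]$, and a symmetric treatment of $F$ using reversibility to run the process backwards from $\ww _{-n}$ to time $n$ reduces $E_{\PPm }[F\cdot G\circ \theta _t]$ to $\langle \tilde F,\Tu H\rangle _{\Lm }$ with $\int \tilde F\, d\mu =E_{\PPm }[F]$; this tends to $E_{\PPm }[F]\cdot E_{\PPm }[G]$ by the previous display. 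A monotone class argument extends mixing to all bounded measurable $F,G$ on $\WW $. Fourth, taking $F=G=1_A$ for any $\theta _t$-invariant $A$ gives $\PPm (A)=\PPm (A)^2 $, hence $\PPm (A)\in \{0,1\}$.

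I expect no significant obstacle beyond Theorem~\ref{l:12}, which carries all the analytic content. The only bookkeeping care points are that $\PPx $ is defined only quasi-everywhere by quasi-regularity of $(\E ,\dom )$, so the identity $\Tt g(\xx )=E_\xx [g(\ww _t)]$ holds only $\mu $-a.e.\,(which is all the spectral argument uses), and the two-sided extension of $\PPm $ must be compatible with $\theta _t$, which is immediate from reversibility together with the concentration property \eqref{:11n}. An equivalent but more compact route is via Lemma~\ref{l:51}: Theorem~\ref{l:12}(2) directly makes $\mu $ extremal among $\Tt $-invariant probability measures on $\sSS $, and disintegrating $\PPm $ along a hypothetical nontrivial $\theta _t$-invariant event would contradict extremality of the time-$0$ marginal, again giving $\PPm (A)\in \{0,1\}$.
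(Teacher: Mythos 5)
Your proposal is correct, and its main route is genuinely different from the paper's. The paper disposes of \tref{l:14} in two lines: \lref{l:51} shows (via \tref{l:12}\thetag{2}) that $ \mu $ is extremal in the set $ \mathscr{I} $ of $ \Tt $-invariant probability measures, and then the classical equivalence between extremality of the invariant measure and shift-ergodicity of the stationary process (Theorem 3.8 of the cited Rey-Bellet notes) is invoked; your closing remark about extremality is essentially this argument. Your primary route instead proves strong mixing of $ (\XX , \PPm )$ directly: $ \ker A = \R \cdot 1 $ from \tref{l:12}\thetag{2}, spectral calculus giving $ \Tt g \to \int g \, d\mu $ in $ \Lm $ (no spectral gap needed, as you say), then the Markov property to reduce correlations of cylinder functionals to $ \langle \tilde F , \Tu H \rangle _{\Lm }$, an $ L^2(\PPm )$-approximation to pass to general bounded functionals, and $ F = G = 1_A $ to get $ \PPm (A) = \PPm (A)^2 $. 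This is more self-contained (it replaces the citation by a standard spectral/Markov argument) and yields a strictly stronger conclusion (mixing rather than mere ergodicity); the paper's route is shorter and reuses \lref{l:51}, which it needs anyway. Two small bookkeeping remarks on your version: for the ``backward'' half of the correlation you do not actually need reversibility---replacing $ F $ by its conditional expectation $ \tilde F (\ww _n ) = E_{\PPm }[ F \mid \ww _n ]$ already puts the correlation in the form $ \langle \tilde F , \Tu H \rangle _{\Lm }$, with $ \int \tilde F \, d\mu = E_{\PPm }[F] $ by stationarity; and the extension of $ \PPm $ to the two-sided path space $ \WW $ really rests on stationarity of $ \mu $ (reversibility only identifies the extension with the time-reversed process), which is exactly how the paper treats it.
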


We now explain the idea behind the proofs of the main theorems. 
% \tref{l:11} follows from the irreducibility of the labeled process $ \X $ proved in \cite{o-tu.irr}. 
The critical step is \tref{l:12} \thetag{1}; the other steps follow from this using rather standard argument. 
We use the lower Dirichlet form $ (\El , \doml )$ introduced in \lref{l:43}. 
This Dirichlet form satisfies the relation \cite{o.dfa,k-o-t.udf}
\begin{align}\label{:14a}&
 (\El , \doml ) \le (\E , \dom )
\end{align}
and has a finite volume approximation $ \{ (\ERl , \dRl ) \}_{\rR \in \N } $ such that 
$ (\El , \doml ) $ is the increasing limit of $ \{ (\ERl , \dRl ) \}_{\rR \in \N } $ (see \lref{l:43}). 
Each $ (\ERl , \dRl ) $ is given by the integration of Dirichlet forms 
$ (\1  , \doml _{\Rxi }^{\m } ) $ [see Eq.\,\eqref{:42w}]. 

In Eq.\,\eqref{:42a}, we relate $ (\1  , \doml _{\Rxi }^{\m } ) $ to $ (\ERxim , \domlRxim ) $.
Using the quasi-Gibbs property of $ \mu $ in \lref{l:31}, 
we prove the ergodicity of $ (\ERxim , \domlRxim ) $. 
The ergodicity of $ (\ERxim , \domlRxim ) $ implies that of 
$ (\1  , \doml _{\Rxi }^{\m } ) $. 
Then, using the number rigidity of $ \mu $ in \lref{l:42} and the tail triviality of $ \mu $ in \lref{l:44}, we deduce the ergodicity of the increasing limit $ (\El , \doml ) $. 

Because of the uniqueness of Dirichlet forms given by \lref{l:46}, 
the equality holds in Eq.\,\eqref{:14a}. That is, $ (\El , \doml ) = (\E , \dom )$. 
Hence, we obtain the ergodicity of $ (\E , \dom )$ from that of $ (\El , \doml ) $.

\medskip

Let $ \map{\Phi }{\Rd }{\R \cup \{ \infty \} }$ and 
$ \map{\Psi }{\Rd \times \Rd }{\R \cup \{ \infty \} }$ 
be measurable functions. A stochastic process given by a solution $ \X = (X^i)_{i}$ of the ISDE 
\begin{align*}&
X_t^i - X_0^i = B_t^i + \half \int_0^t \nabla \Phi (X_u^i) du 
+ \half \int_0^t \sum_{j\ne i} \nabla \Psi (X_u^i,X_u^j) du
\end{align*}
is called an interacting Brownian motion (in infinite dimensions) with potential 
$ (\Phi , \Psi )$. Here, $ (\nabla \Psi )(x,y)= \nabla_x \Psi (x,y)$. 
The study of interacting Brownian motions was initiated by Lang \cite{lang.1,lang.2}, who 
solved the above ISDE for $ (0, \Psi )$, where $ \Psi \in C_0^3(\Rd)$ is of Ruelle's class in the sense that it is super-stable and regular. 
Fritz \cite{Fr} constructed non-equilibrium solutions for the same potentials as in \cite{lang.1,lang.2} under the further restriction that the dimension $ d \le 4 $. 
Tanemura derived the solution for a hard-core potential \cite{T2}, while 
Fradon--Roelly--Tanemura solved the ISDE for the hard-core potential with long-range interactions, but still of Ruelle's class \cite{frt.00}. Various ISDEs with logarithmic interaction potentials have also been solved \cite{h-o.bes,k-o-t.udf,o.isde,o.rm2,o-t.tail,o-t.airy,tsai.14,kawa.22}. 

There are fewer results for the irreducibility and ergodicity of solutions of interacting Brownian motions. Albeverio--Kondratiev--R\"{o}ckner \cite{akr.98} proved the equivalence of the ergodicity of Dirichlet forms and the extremal property of the associated (grand canonical or canonical) Gibbs measures with potentials of Ruelle's class \cite{ruelle.2}. 
Corwin and Sun \cite{corwin-sun.14} proved the ergodicity of the Airy line ensembles, for which the dynamics are related to the Airy$ _2$ random point field. 
The first author and Tsuboi \cite{o-tu.irr} proved that the labeled dynamics of Dyson's model in infinite dimensions are irreducible. 
A general result concerning the ergodicity of Dirichlet forms can be found in \cite{fot.2}. 

The remainder of this paper is organized as follows. 

In \sref{s:2}, we recall the concept of the logarithmic derivative of sine$ _2$ random point field. 
%we prove \tref{l:11}. 
In \sref{s:3}, we show that a labeled diffusion in a finite volume is ergodic. 
In \sref{s:4}, we prove \tref{l:12}. 
Finally, in \sref{s:5}, we prove \tref{l:14}. 

\section{ISDE and logarithmic derivative}\label{s:2}
%\section{Proof of \tref{l:11}}\label{s:2}
Let $ \muone (dxd\sss ) = \rho ^1 \mu_x (d\sss )$ be the reduced one-Campbell measure of $ \mu $. Here, $ \rho^1 (x) \equiv \rho $ is the one-point correlation function of $ \mu $ and 
$ \mu_x = \mu (\cdot | \sss (\{ x \}) \ge 1 ) $ is the reduced Palm measure conditioned at $ x \in \R $. Here $ \rho $ is the constant in \eqref{:10f}.  

Let $ \dmu $ be the logarithmic derivative of $ \mu $. 
By definition, $ \dmu $ is the function defined on 
$ \mathbb{R}\times \sSS $ such that $ \dmu \in \Lmlocone $ and 
\begin{align} & \label{:20a}
 \int_{\mathbb{R} \times \sSS } \dmu ( s , \sss ) \varphi ( s , \sss ) d \muone 
=
- \int_{\mathbb{R} \times \sSS } \nabla \varphi ( s , \sss ) d \muone 
\end{align}
for all $ \varphi \in C_0^{\infty}(\mathbb{R}) \otimes \di ^b $, 
where $ \di ^b $ is the set consisting of bounded, local, and smooth functions on $ \sSS $ \cite{o-t.tail}. We write $ \sss = \sum_i \delta_{\si }$. 
It has been proved \cite{o.isde} that $ \mu $ has a logarithmic derivative such that, 
strongly in $ \Lmloctwo $, 
\begin{align}\label{:20b}
\dmu (s , \sss ) & = 2 \limR \sum_{{\si }\in \SR } \frac{1}{s -{\si }} 
= 2 \limR \sum_{ | s -{\si }| < \rR } \frac{1}{s -{\si }} 
.\end{align}
The convergence of sums in Eq.\,\eqref{:20b} follows from the fact that 
$ \mu $ is translation-invariant, $ d=1$, and the variance of $ \sss ([- \rR , \rR ])$ under $ \mu $ increases logarithmically as $ \rR \to \infty $. 
The translation invariance of $ \mu $ is clear because the determinantal kernel $ \KK $ in Eq.\,\eqref{:10f} defining $ \mu $ is translation-invariant. The logarithmic growth of the variance follows from a direct calculation using the Fourier transform \cite{Sos00}. 
The second equality in Eq.\,\eqref{:20b} comes from $ d = 1 $ and the translation invariance of $ \mu $. 

Using $ \dmu $, we represent the ISDE \eqref{:10e} for $ \beta = 2 $ as 
\begin{align} &\notag 
 X_t^i - X_0^i = B_t^i + 
\int_0^t \dmu (X_u^i , \XX _u^{i\diamondsuit }) du \quad (i\in\mathbb{Z})
,\end{align}
where $ \XX _u^{i\diamondsuit }= \sum_{j\ne i ,\, j \in \Z } \delta_{X_u^j}$. 
Recall that $ \pPi = \pP \circ \X ^{-1}$ and $ \X _0 \elaw \mu \circ \lab ^{-1}$ with the label $ \lab $ given in \sref{s:1}. 
The following irreducibility of labeled dynamics was proved in \cite{o-tu.irr}. 
\begin{lemma}[\cite{o-tu.irr}]\label{l:21}
%$ \{ \pP _{\x }^{\infty} \} $
 $ \pPi $ is irreducible. That is, if 
$ \mathbf{A}$ and $ \mathbf{B} \in \mathscr{B}(\RZZ ) $ satisfy 
% \9 
\begin{align}\label{:21a}& 
 \pPi ( \wz \in \mathbf{A} ,\, \w _t \in \mathbf{B} ) = 0 
,\end{align}
then $\pPi ( \wz \in \mathbf{A} ) = 0 $ or $\pPi ( \w _t \in \mathbf{B} ) = 0 $. 
\end{lemma}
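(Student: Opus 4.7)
The plan is to establish the irreducibility of $\pPi$ via a finite-dimensional reduction followed by the Stroock--Varadhan support theorem. The claim is equivalent to the assertion that whenever $\pPi(\wz \in \mathbf{A}) > 0$ and $\pPi(\w _t \in \mathbf{B}) > 0$, the joint probability $\pPi(\wz \in \mathbf{A},\, \w _t \in \mathbf{B})$ is strictly positive.

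First, by a monotone class argument it suffices to establish the claim for cylindrical $\mathbf{A}$ and $\mathbf{B}$ depending only on coordinates in a fixed finite index set $I \subset \Z$. Writing $\X = (\X^{\m}, \X^{\m *})$ with $\X^{\m} = (X^i)_{i \in I}$, I would condition on the outside trajectory $\X^{\m*}$. By the consistency framework for infinite systems of finite-dimensional SDEs of \cite{k-o-t.ifc}, the conditional law of $\X^{\m}$ given $\X^{\m*}$ satisfies a finite-dimensional SDE on the Weyl chamber whose drift is built from $\dmu$ and the outside trajectory. This drift is smooth in the inside variables away from inside-outside collisions, and such collisions are excluded almost surely by the non-collision property of Dyson's model.

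Second, on the event that no inside-outside collisions occur in $[0,t]$, the conditional SDE is a non-degenerate diffusion with locally smooth, locally bounded drift on a relatively open subset of the Weyl chamber. The Stroock--Varadhan support theorem, applied to a stopped version of this SDE on a large compact subregion, implies that the conditional transition density is strictly positive on pairs $(\x, \y)$ avoiding the collision set. Integrating against the law of $\X^{\m*}$ and the law of the starting point then yields the strict positivity of $\pPi(\wz \in \mathbf{A},\, \w _t \in \mathbf{B})$ whenever both marginals are positive, as required.

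The main obstacle is controlling the conditional drift uniformly in the outside configuration $\X^{\m*}$, since the logarithmic potential becomes singular as inside particles approach outside ones, and in principle the outside cloud could accumulate near the window $I$. This is handled by combining the number rigidity of $\mu$ (\lref{l:42}) with the tail triviality (\lref{l:44}): on a set of positive probability the outside particles maintain a positive distance from any chosen bounded window during $[0,t]$, so the logarithmic drift restricted to that event is smooth and locally bounded, reducing the claim to a standard finite-dimensional support theorem on a bounded domain. The final passage from cylindrical back to arbitrary Borel $\mathbf{A}, \mathbf{B}$ by approximation is routine.
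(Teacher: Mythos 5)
First, note that this paper does not prove \lref{l:21} at all: it is quoted from \cite{o-tu.irr} and is not even used in the proofs of the main theorems, so your attempt can only be measured against the cited external argument. As a self-contained proof, your sketch has a decisive gap at the very first step. The reduction to cylindrical $\mathbf{A}$, $\mathbf{B}$ \lq\lq by a monotone class argument'' is not available: the family of pairs for which \eqref{:21a} forces a marginal to vanish is not a Dynkin system (it is not stable under complements, and positivity of the joint probability is not preserved under decreasing limits), and irreducibility on cylinder sets genuinely does not imply irreducibility on $\mathscr{B}(\RZZ )$. A toy example makes this concrete: if $X^i = Z + G^i$ with $Z\in\{0,5\}$ a fair coin and $(G^i)$ i.i.d.\ Gaussian, and $Y^i = G^i + (5-Z) + W^i$ with $(W^i)$ independent Gaussian noise, then every pair of finite-dimensional events of positive probability has positive joint probability, yet the Borel (tail-measurable) events $\{\lim_n n^{-1}\sum_{i\le n} x^i = 0\}$ and $\{\lim_n n^{-1}\sum_{i\le n} y^i = 0\}$ have positive marginals and zero joint probability. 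So the infinite-dimensional content of the lemma lives exactly in the tail, and this is where tail triviality of $\mu$ (or of the associated path-space measure) has to be invoked; in your sketch it is invoked elsewhere and the passage back from cylinder sets is dismissed as routine, which it is not.

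The finite-dimensional part of your argument also has problems. Number rigidity (\lref{l:42}) and tail triviality (\lref{l:44}) are static properties of $\mu$ and give no control on the trajectories of the outside particles over $[0,t]$; the event that outside particles avoid a fixed window during $[0,t]$ has probability zero in any stationary regime with positive density, and even if one works with an event of positive probability on which near-collisions with the window boundary are excluded, conditioning on such a path event changes the conditional dynamics and need not intersect $\{\wz \in \mathbf{A}\}\cap\{\w_t\in\mathbf{B}\}$, so strict positivity of a stopped, localized transition density does not integrate back to positivity of the original joint probability. Finally, the conditional drift built from $\dmu$ and the outside trajectory is singular as inside particles approach outside ones (and each other), so the Stroock--Varadhan support theorem does not apply directly; one needs the full machinery of the finite-dimensional reduction with consistency (as in \cite{k-o-t.ifc} and \cite{o-t.tail}) together with a careful treatment of the singular drift, which is precisely the technical core of \cite{o-tu.irr} that your outline leaves unaddressed.
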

% 
% Using \lref{l:21}, we prove that the unlabeled diffusion $ \PPm $ is irreducible. 
% 
% \medskip 
% 
% \noindent {\em Proof of \tref{l:11}. } 
% Let $ \PP (t ,\xx , d\mathfrak{y} ) $ and $ \pPi (t ,\x , d\y ) $ be the transition probabilities of 
% $ \XX $ and $ \X $, respectively. 
%% 
% Let $ \mathfrak{A} , \mathfrak{B} \in \mathscr{B}(\sSS ) $. Set 
% $ \mathbf{A}= \lab (\mathfrak{A}) $ and $ \mathbf{B}= \lab (\mathfrak{B})$. 
% For all $ t \in [0,\infty)$, note that 
% \begin{align}\label{:23a}&
% \int_{\mathfrak{A}} \PP (t ,\xx , \mathfrak{B}) \mu (d\xx ) = 
% \int_{\mathbf{A}} \pPi (t ,\x , \mathbf{B}) \mul (d\x )
% .\end{align}
% From Eqs.\,\eqref{:21a} and \eqref{:23a}, we deduce 
% \begin{align}&\notag 
% \pPi ( \wz \in \mathbf{A} ,\, \w _t \in \mathbf{B} ) = 0 
% .\end{align} 
% Hence, using \lref{l:21}, we find that 
% \begin{align}&\label{:23d} 
% \text{$\pPi ( \wz \in \mathbf{A} ) = 0 $ or $\pPi ( \w _t \in \mathbf{B} ) = 0 $}
% .\end{align}
%% 
% Clearly, $\PPm ( \wwz \in \AAA ) = \pPi ( \wz \in \mathbf{A} ) = 0 $. 
%%
% Using Eqs.\,\eqref{:23a} and \eqref{:23d}, we see that 
% $ \PPm ( \ww _t \in \BB ) = \pPi ( \w _t \in \mathbf{B} )= 0$. 
% Combining these, we have poved \tref{l:11}. 
% \qed 

\section{Ergodicity of the local labeled diffusion}\label{s:3}

In this section, we consider a labeled $ m $-particle diffusion in $ [-\rR ,\rR ] $. 

Let $ \SR = \{ s \in \R \,;\, |s| < \rR \} $ and $ \piRc (\sss ) = \sss (\cdot \cap \SR ^c )$. 
Let $ \muRxim $ be the regular conditional probability defined by 
\begin{align}\label{:A1a}&
 \muRxim = \mu ( \,\cdot\, | \sss (\SR ) = m , \piRc (\sss ) = \piRc (\xi ) )
.\end{align}

We set the subset $ \SSSR $ of $ [-\rR , \rR ]^m $ as $ \SSSRone = [-\rR ,\rR ]$ and, 
for $ m \ge 2 $, 
\begin{align}\label{:A10}&
 \SSSR = \{ \xm = (x^i)_{i=1}^m \, ;\, - \rR \le x^i < x^{i+1} \le \rR \, , \, i=1,\ldots, m-1 \} 
.\end{align}
Let $ \SSRm = \{ \sss \in \sSS \, ;\, \sss (\SR ) = m \} $. 
Let $\map{ \labR }{ \SSRm}{\SSSR }$ be the map such that $ \labR (\xx ) = (x^1,\ldots,x^m)$, 
where $ \xx = \sum_{i=1}^m \delta_{x^i}$. 
A function $ \mmm (\xm ) $ is called the local density function of $ \mu $ on $ \SSSR $ 
conditioned as $ \sss (\overline{\sS }_{\rR } ) = m , \piRc (\sss ) = \piRc (\xi ) $ if 
$ \mmm d\xm $ is the the image measure of $ \muRxim $ under the map $ \labR $. 
That is, 
\begin{align}\label{:A10b}&
 \mmm d\xm = \muRxim \circ (\labR )^{-1}
.\end{align}
 
Let $ \Deltam (\x )$ be the difference product such that, for $ \x = (x^1,\ldots,x^m )$, 
\begin{align}\notag 
\Deltam (\x ) =& \prod_{i<j}^{m}|x^i-x^j|^2 && \text{ for } m \ge 2 
\\ \notag 
= & 1 && \text{ for } m= 1
.\end{align}
In \cite{o.rm}, the first author proved that $ \muRxim $ has 
a local labeled density $ \mmm $ satisfying the estimate of Eq.\,\eqref{:A1b}. 
\begin{lemma} [{Theorem 2.2 in \cite{o.rm}}] \label{l:31}
For each $ \rR \in \N $ and $ \mu $-a.s.\,$ \xi $, 
the probability measure $ \muRxim \circ (\labR )^{-1}$ has a density 
$ \mmm $ with respect to the Lebesgue measure such that, 
for $ \xm = (x^i)_{i=1}^{\m } \in \SSSR $, 
\begin{align}\label{:A1b}&
\cref{;A1}^{-1}\Deltam (\x ) \le 
 \mmm (\xm ) \le 
\cref{;A1}\Deltam (\x )
.\end{align}
Here, $ \Ct = \cref{;A1}({\Rpixi }) \label{;A1}\ge 1 $ is a constant depending on 
$ (\Rpixi )$ for $ \mu $-a.s.\,$ \xi $. 
\end{lemma}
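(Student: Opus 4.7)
The plan is to derive the density bound by approximating the sine$_2$ point process $\mu$ through finite GUE eigenvalue distributions and transferring the explicit density of the finite conditional system to the bulk scaling limit. Take $\mu_N$ to be the (bulk-rescaled) GUE eigenvalue distribution of size $N$, whose joint density on $\R^N$ has the form
\begin{align*}
f_N(\y) = \frac{1}{Z_N} \prod_{i<j}|y^i - y^j|^2 \exp\Bigl(-\sum_i V_N(y^i)\Bigr),
\end{align*}
and which converges in the bulk to $\mu$. Condition $\mu_N$ on the event that exactly $m$ eigenvalues lie in $\SR=[-\rR,\rR]$ while the remaining $N-m$ match the outer configuration $\piRc(\xi)$. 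Factoring the Vandermonde into inside--inside, inside--outside, and outside--outside pieces (the last of which is absorbed into the normalization), the conditional density on $\SSSR$ is proportional to
\begin{align*}
\Deltam(\x) \cdot G_N(\x,\xi), \qquad G_N(\x,\xi)=\prod_{i=1}^{m} \prod_{y \in \piRc(\xi)} |x^i - y|^2 \cdot e^{-V_N(x^i)}.
\end{align*}
Thus the task reduces to showing that $G_N(\x,\xi)$ admits uniform positive upper and lower bounds as $\x$ varies over $\SSSR$ for $\mu$-a.s.\,$\xi$, and that these bounds survive as $N\to\infty$.

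Fixing a reference point $\x_0\in\SSSR$, consider
\begin{align*}
\log\frac{G_N(\x,\xi)}{G_N(\x_0,\xi)}
= 2 \sum_{i=1}^{m} \sum_{y \in \piRc(\xi)} \log\frac{|x^i - y|}{|x_0^i - y|}
- \sum_{i=1}^{m}\bigl(V_N(x^i) - V_N(x_0^i)\bigr).
\end{align*}
The potential term is uniformly bounded on $[-\rR,\rR]$ as $N\to\infty$ because $V_N$ has a smooth bulk limit with bounded oscillation on compact sets. For $|y|>\rR\ge |x^i|,|x_0^i|$, the Taylor expansion
\begin{align*}
\log\frac{|x^i - y|}{|x_0^i - y|} = -\frac{x^i - x_0^i}{y} + O\bigl(|y|^{-2}\bigr)
\end{align*}
decomposes the first sum into a principal part $-(x^i - x_0^i)\sum_{y\in\piRc(\xi)} 1/y$ plus an absolutely convergent tail. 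The principal part is exactly the conditionally convergent sum appearing in the logarithmic derivative \eqref{:20b}; for $\mu$-a.s.\,$\xi$ it converges thanks to the translation invariance of $\mu$ and the logarithmic growth of $\mathrm{Var}_{\mu}(\sss([-\rR',\rR']))$ as $\rR'\to\infty$. This yields a uniform $N$-independent bound on the ratio, and hence the desired two-sided density bound at finite $N$.

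Passing to the limit $N\to\infty$, the bulk convergence $\mu_N \to \mu$ together with the compatibility of conditioning transfers the density bound to $\muRxim\circ\labR^{-1}$, with constant $\cref{;A1}=\cref{;A1}(\Rpixi)$ recovered from the supremum and infimum of the limiting exterior factor. The main obstacle lies precisely in this limiting step: because the sum in \eqref{:20b} converges only conditionally, one must carefully align the GUE approximation scheme with the specific symmetric truncation defining $\dmu$, so that the translation-invariance cancellations that make the principal sum finite are preserved along the approximation. Once this compatibility is verified, the bound \eqref{:A1b} follows, and the strict positivity of the lower bound records the quasi-Gibbs property of $\mu$ with respect to the logarithmic interaction.
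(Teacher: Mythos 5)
You should first note that the paper does not prove this lemma at all: it is quoted verbatim as Theorem 2.2 of \cite{o.rm} (the quasi-Gibbs property of the sine$_2$ field), so the relevant comparison is with the proof given there. Your strategy --- approximate $\mu$ by bulk-scaled GUE, factor the Vandermonde into inside--inside, inside--outside and outside--outside parts, and bound the ratio $G_N(\x ,\xi )/G_N(\x _0,\xi )$ uniformly on $\SSSR $ --- is in fact the same overall strategy as in \cite{o.rm}. The structural part of your computation is fine: the potential term vanishes on compacts under bulk scaling, the expansion $\log (|x^i-y|/|x_0^i-y|)=-(x^i-x_0^i)/y+O(|y|^{-2})$ is correct, and a uniform two-sided bound on the ratio would indeed yield \eqref{:A1b} with a constant depending on $(\Rpixi )$ after normalization.

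The genuine gap is the step you yourself defer with ``once this compatibility is verified'': the passage $N\to \infty $ is not a routine weak-convergence argument, and it is precisely the content of the cited theorem. Concretely: (i) at finite $N$ the ``principal part'' is $\sum_y 1/y$ over \emph{all} $N-m$ outside eigenvalues of the scaled GUE, not a symmetric truncation of an infinite translation-invariant configuration, so the cancellation that makes the sum in \eqref{:20b} converge is not available as stated; one needs uniform-in-$N$ estimates (via one- and two-point correlation functions and the symmetry of the bulk scaling) showing that this functional is tight and converges in law to the corresponding $\mu$-a.s.\ finite functional of $\piRc (\xi )$. (ii) Conditioning is not continuous under vague convergence, and the functional $\xi \mapsto \sum_{y\in \piRc (\xi )}1/y$ is not vaguely continuous, so the conditional densities of $\mu_N$ do not automatically converge to the regular conditional probability $\muRxim $ of the limit; in \cite{o.rm} this is handled by a separate general criterion for transferring quasi-Gibbs bounds along an approximating sequence, whose hypotheses must be verified for the sine kernel. (iii) The constant $\cref{;A1}(\Rpixi )$ must be shown to be finite and measurable for $\mu $-a.s.\ $\xi $ (not merely $\mu_N$-a.s.), which again requires the a.s.\ control of the conditionally convergent tail under $\mu $ itself. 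Without these three ingredients the argument establishes the bound only for the finite conditioned GUE systems, not for $\muRxim \circ (\labR )^{-1}$, so as written the proposal identifies the right route but omits the core of the proof.
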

Taking Eq.\,\eqref{:A1b} into account, we set 
\begin{align} \label{:32p}& 
 \EWRD (f,g) = \int_{\SSSR } \DDDRO [f,g] (\x ) \Deltam (\x ) d\x 
,\\ \label{:32q} &
\DDDRO [f,g] = \half \sum_{i=1}^m \PD{f}{x^i} \PD{g}{x^i}
.\end{align}

\begin{lemma} \label{l:32}
\thetag{1} $ (\EWRD , C_0^{\infty}(\SSSR ))$ is closable on $ \LRm $. 
\\\thetag{2} 
The diffusion associated with the closure $ ( \EWRD , \domWR )$ 
of $ (\EWRD , C_0^{\infty}(\SSSR ))$ on $ \LRm $ 
has a transition probability density of 
$ \pWR $, which is smooth on $ \SSSR $ and has the property that 
\begin{align}\label{:32a} &
0 < \pWR (t,\x , \y ) < \infty 
\quad \text{ for } (t,\x , \y ) \in (0,\infty )\times \SSSR \times \SSSR 
.\end{align}
\end{lemma}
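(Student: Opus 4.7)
For part (1), the approach is to observe that $\Deltam$ is $C^\infty$ and strictly positive on $\SSSR$, hence bounded away from $0$ and from $\infty$ on every compact subset of $\SSSR$. An integration by parts against $g \in C_0^\infty(\SSSR)$ gives
\begin{equation*}
\EWRD(f,g) = -\int_{\SSSR} (Lf)\, g\, \Deltam\, d\x,
\end{equation*}
where
\begin{equation*}
L = \frac{1}{2}\Delta + \sum_{i=1}^{m}\Big(\sum_{j\ne i}\frac{1}{x^i-x^j}\Big)\frac{\partial}{\partial x^i}.
\end{equation*}
Thus $L$ is densely defined and symmetric on $\LRm$ with core $C_0^\infty(\SSSR)$, which gives closability of $(\EWRD, C_0^\infty(\SSSR))$ by a standard Friedrichs-extension argument.

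For part (2), on every compact $K \subset \SSSR$ the operator $L$ has smooth, uniformly bounded coefficients and identity principal symbol; in particular it is uniformly elliptic on $K$. Interior parabolic regularity then produces a jointly $C^\infty$ kernel $\pWR(t,\x,\y)$ for the semi-group on $(0,\infty)\times\SSSR\times\SSSR$.

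The nontrivial point is the strict positivity, since $\nabla\log\Deltam$ blows up on $\partial\SSSR$ and Aronson-type Gaussian lower bounds do not extend up to the boundary. The plan is to use the Karlin--McGregor/Doob $h$-transform representation of the Dyson-type dynamics: the generator $L$ is conjugate to the Brownian generator on $\SSSR$ via the harmonic Vandermonde $\phi(\x) = \sqrt{\Deltam(\x)}$, yielding a determinantal identity of the form
\begin{equation*}
\pWR(t,\x,\y)\sqrt{\Deltam(\x)\Deltam(\y)} = \det\bigl[q_{\rR}(t,x^i,y^j)\bigr]_{i,j=1}^{m},
\end{equation*}
with $q_{\rR}$ the strictly positive transition density of Brownian motion in $[-\rR,\rR]$ under the boundary condition dictated by the test-function space. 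Positivity of the right-hand side on $\SSSR\times\SSSR$ is the classical Lindstr\"om--Gessel--Viennot non-intersecting-paths statement.

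The main obstacle will be justifying this $h$-transform identification globally on $\SSSR$, which amounts to showing that the diffusion associated with $(\EWRD,\domWR)$ does not reach $\partial\SSSR = \{x^i = x^{i+1}\}$ in finite time. This non-collision property is standard at $\beta=2$ and can be proved by a capacity argument showing that $\partial\SSSR$ has zero $\EWRD$-capacity, parallel to the capacity statement quoted in Section~\ref{s:1} for the infinite system. An alternative, formula-free route is available: interior parabolic Harnack inequalities on a compact exhaustion of $\SSSR$ yield local positivity of $\pWR$, and the connectedness of $\SSSR$ together with the Stroock--Varadhan support theorem then propagates it to all of $(0,\infty)\times\SSSR\times\SSSR$.
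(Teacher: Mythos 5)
Your part (1) is in the same spirit as the paper, which obtains closability directly from the continuity of the density $\Deltam $ (citing \cite[Lemma 3.2]{o.dfa}); one caveat about your route: if $C_0^{\infty}(\SSSR )$ is allowed to contain functions that do not vanish near the faces $\{x^1=-\rR \}$ and $\{x^m=\rR \}$, the integration by parts produces boundary terms, so $\EWRD (f,g)=(-Lf,g)_{\LRm }$ fails on that core and the Friedrichs argument does not apply verbatim; the continuity-of-the-density criterion avoids this issue altogether.

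The genuine gap is in your main route to positivity in part (2). The claimed identity
$\pWR (t,\x ,\y )\sqrt{\Deltam (\x )\Deltam (\y )}=\det [q_{\rR }(t,x^i,y^j)]_{i,j=1}^{m}$
with $q_{\rR }$ a Neumann (reflecting) heat kernel on $[-\rR ,\rR ]$ is not justified here: the Vandermonde $h=\sqrt{\Deltam }$ is harmonic in the interior but does \emph{not} satisfy the Neumann condition on the faces $\{x^1=-\rR \}$, $\{x^m=\rR \}$, so it is not harmonic for the reflected process killed at collisions, and the Doob $h$-transform picture breaks exactly at the reflecting boundary. Concretely, the unitary map $f\mapsto hf$ carries $(\EWRD ,\domWR )$ to the flat Dirichlet integral \emph{plus} a surface term $-\int uv\,\partial_n \log h \, d\sigma $ over those faces, i.e.\ a Robin-type condition with coefficient $\partial_n\log h$, not the pure Neumann condition; hence $\pWR \sqrt{\Deltam (\x )\Deltam (\y )}$ is not the Karlin--McGregor determinant of reflecting Brownian kernels, and positivity cannot be read off from the Lindstr\"om--Gessel--Viennot statement as you state it (the identity is fine on the whole line or with absorbing ends, but not with reflection). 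Your fallback, formula-free route is essentially the paper's actual proof: the collision set $\Gamma =\{x^i=x^j \text{ for some } i\ne j\}$ is negligible because $\mathrm{Cap}(\Gamma )=0$, Neumann conditions are posed on the two faces, and smoothness together with \eqref{:32a} follows from solving the heat equation \eqref{:32b} on the relatively compact, connected set $\SSSR $ with coefficients smooth there. For that route you do not need (and should not invoke) the Stroock--Varadhan support theorem, since the drift is only locally bounded on $\SSSR $; chaining the interior parabolic Harnack inequality over a compact exhaustion of the connected set $\SSSR $, combined with the Chapman--Kolmogorov equation, already upgrades local positivity of $\pWR $ to all of $(0,\infty )\times \SSSR \times \SSSR $.
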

\begin{proof}
Because $ \Deltam (\x )$ is continuous, we obtain \thetag{1} (see \cite[Lemma 3.2]{o.dfa}).

Let $ \Gamma = \{ \x =(x^i)_{i=1}^m; x^i=x^j \text{ for some $ i\ne j$} \} $. 
Then we can easily see that $ \mathrm{Cap}(\Gamma ) = 0 $, 
where $ \mathrm{Cap}$ is the capacity of 
the Dirichlet form $ ( \EWRD , \domWR )$ on $ \LRm $. 

The transition density $ \pWR $ is described by the heat equation 
\begin{align}\label{:32b}&
\Big\{
\PD{}{t} - \half \sum_{i=1}^m ( \PD{}{x^i})^2 - \sum_{i,j=1 , i\ne j }^{m} 
\frac{1}{x^i-x^j} \PD{}{x^i} \Big\} \pWR (t,\x ,\y ) = 0 
.\end{align}
Here Neumann boundary condition is posed on $ \{ x^1= - R \} $ and $ \{ x^m= R \} $. 
The boundary $ \Gamma $ can be ignored because $ \mathrm{Cap}(\Gamma ) = 0 $. 

Note that $ \SSSR $ is a relatively compact, connected open set in $ [-\rR , \rR ]^m $, and that 
the coefficients are smooth on $ \SSSR $. 
Hence, solving \eqref{:32b}, we obtain \thetag{2}. 
\end{proof}

\begin{lemma} \label{l:33}
\thetag{1} Suppose $ \TW (t)f = f $ for all $ t \ge 0 $. Then, $ f $ is constant a.e. 
\\\thetag{2} 
Suppose $ f \in \domWR $ satisfies $ \EWRD (f,f) = 0 $. 
 Then, $ f $ is constant a.e. 
\end{lemma}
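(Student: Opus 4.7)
The plan is to derive \thetag{2} from \thetag{1} by standard spectral theory, and to establish \thetag{1} by exploiting the strict positivity of the transition density $\pWR$ afforded by \lref{l:32}\thetag{2}.

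\textbf{Reduction of \thetag{2} to \thetag{1}.} The closed symmetric form $(\EWRD, \domWR)$ on $\LRm$ is associated with a non-negative self-adjoint generator $\mathbf{L}$ via $\EWRD(f, f) = \langle -\mathbf{L} f, f \rangle_{\LRm}$. Hence $\EWRD(f, f) = 0$ forces $f \in \ker \mathbf{L}$, which is equivalent to $\TW(t) f = f$ for every $t \ge 0$, and \thetag{2} follows from \thetag{1}.

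\textbf{Proof of \thetag{1}.} I first record that the semigroup is conservative: $\TW(t) 1 = 1$. Indeed, $\SSSR$ is bounded, the reference measure $\Deltam d\x$ is finite, the singular set $\Gamma$ has zero $(\EWRD, \domWR)$-capacity, and Neumann conditions are imposed on the smooth outer boundary by the construction of $\pWR$ in \eqref{:32b}; hence $\int_{\SSSR} \pWR(t, \x, \y) \Deltam(\y)\, d\y = 1$ for every $\x \in \SSSR$ and $t > 0$. Now suppose $\TW(t) f = f$ for all $t \ge 0$. Fix $c \in \R$; by conservativity I can apply Jensen's inequality to the probability kernel $\pWR(t, \x, \cdot)\Deltam(\cdot)\, d\cdot$ and the convex function $r \mapsto (r - c)^+$, obtaining
\begin{align*}
\TW(t)\bigl((f - c)^+\bigr)(\x) \ge (f(\x) - c)^+,
\end{align*}
and integration against the invariant measure $\Deltam d\x$ forces equality a.e. Consequently
\begin{align*}
(f(\x) - c)^+ = \int_{\SSSR} \pWR(t, \x, \y)\,(f(\y) - c)^+\, \Deltam(\y)\, d\y.
\end{align*}
If $\{f > c\}$ had positive measure, the strict positivity of $\pWR$ from \lref{l:32}\thetag{2} would make the right-hand side strictly positive at every $\x \in \SSSR$, contradicting the vanishing of $(f - c)^+$ on $\{f \le c\}$, which would also have positive measure if $f$ were not a.e.\ constant. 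Hence, for each $c$, either $\{f > c\}$ or $\{f \le c\}$ is $\Deltam d\x$-null, and $f$ is constant a.e.

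\textbf{Anticipated obstacle.} The only delicate step is the conservativity $\TW(t) 1 = 1$: since $C_0^{\infty}(\SSSR)$ does not literally contain the constant function and $\SSSR$ has the singular inner boundary $\Gamma$, this requires either a probabilistic argument (reflecting diffusion on a bounded set with an inner singularity of capacity zero and Neumann outer reflection) or a direct verification from the heat-kernel representation \eqref{:32b}. Once this is in hand, the Jensen/strict-positivity mechanism above closes the proof without further difficulty.
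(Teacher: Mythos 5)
Your argument is correct, and its engine is the same as the paper's: the strict positivity and finiteness of the transition density $ \pWR $ on the relatively compact, connected set $ \SSSR $ from \lref{l:32}\thetag{2}. The paper's own proof is a two-line appeal to exactly these facts, and the natural reading is that it treats the two assertions in the opposite order from yours: \thetag{2} is read off directly from connectedness (if $ \EWRD (f,f)=0 $ then $ \nabla f = 0 $ a.e.\ on the connected open set $ \SSSR $, where $ \Deltam > 0 $ off $ \Gamma $, so $ f $ is a.e.\ constant), while \thetag{1} follows from irreducibility of the kernel; you instead prove \thetag{1} first via conservativity plus Jensen and strict positivity, and then deduce \thetag{2} by spectral theory. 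Both routes are sound; yours has the advantage of making the irreducibility mechanism explicit, at the cost of needing $ \TW (t)1=1 $, which the paper never has to invoke for \thetag{2}. Concerning the obstacle you flag: conservativity does follow from facts already recorded in the proof of \lref{l:32}. Since $ \mathrm{Cap}(\Gamma ) = 0 $ and the outer faces $ \{ x^1 = -\rR \} $, $ \{ x^m = \rR \} $ belong to $ \SSSR $ as defined in \eqref{:A10}, one can choose cutoff functions vanishing on shrinking neighborhoods of $ \Gamma $, equal to $ 1 $ elsewhere, with energies and $ L^2 $-distances to $ 1 $ tending to zero; their supports are compact in $ \SSSR $, so after mollification they lie in $ C_0^{\infty}(\SSSR ) $, whence $ 1 \in \domWR $ with $ \EWRD (1,1)=0 $ and therefore $ \TW (t) 1 = 1 $. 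Two cosmetic points: the identity $ \EWRD (f,f) = \langle -\mathbf{L}f , f \rangle_{\LRm } $ requires $ f $ in the generator domain, so for $ f \in \domWR $ you should argue via $ \EWRD (f,f) = \| (-\mathbf{L})^{1/2} f \|^2_{\LRm } $ (the conclusion $ \mathbf{L}f = 0 $, hence $ \TW (t)f=f $, is unchanged); and in the Jensen step it is worth saying that one runs over rational $ c $ and takes $ c^* = \sup \{ c \, ;\, \{ f \le c \} \text{ is null} \} $ to conclude $ f = c^* $ a.e.
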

\begin{proof}
Note that $ \SSSR $ is a relatively compact, connected open set in $ [-\rR , \rR ]^m $. 
Then, \lref{l:33} follows from Eq.\,\eqref{:32a}. 
\end{proof}

We set 
\begin{align} &\label{:34z}
\ERxim (f,g) = \int_{\SSSR } \DDDRO [f,g] (\x ) \mmm (\x ) d\x 
.\end{align}

\begin{lemma} \label{l:34}
 $ (\ERxim , C_0^{\infty}(\SSSR ))$ is closable on $ \Lmmm $. 
\end{lemma}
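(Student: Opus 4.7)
My plan is to deduce the closability of $(\ERxim, C_0^{\infty}(\SSSR))$ on $\Lmmm$ directly from the already-established closability of $(\EWRD, C_0^{\infty}(\SSSR))$ on $\LRm$ in \lref{l:32}(1), using the two-sided density comparison supplied by \lref{l:31}. The key observation is that the reference measures and the Dirichlet integrands of the two forms differ only by a bounded multiplicative factor on $\SSSR$, so both the $L^2$-norms and the energy norms on the common predomain $C_0^{\infty}(\SSSR)$ are equivalent.

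First I would record the consequence of \lref{l:31}: on $\SSSR$ one has $\cref{;A1}^{-1}\Deltam(\x)\le \mmm(\x)\le \cref{;A1}\Deltam(\x)$. Integrating $|f|^2$ against these two weights gives $\cref{;A1}^{-1}\|f\|_{\LRm}^2\le \|f\|_{\Lmmm}^2\le \cref{;A1}\|f\|_{\LRm}^2$, and the same pointwise bound applied to the non-negative integrand $\DDDRO[f,f](\x)$ yields
\begin{align*}
\cref{;A1}^{-1}\EWRD(f,f)\le \ERxim(f,f)\le \cref{;A1}\EWRD(f,f)
\end{align*}
for every $f\in C_0^{\infty}(\SSSR)$; the analogue for the bilinear forms then follows by polarization.

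With these two equivalences in hand, I would verify the closability criterion directly. Take any sequence $f_n\in C_0^{\infty}(\SSSR)$ with $f_n\to 0$ in $\Lmmm$ and $\ERxim(f_n-f_k,f_n-f_k)\to 0$ as $n,k\to\infty$. Both properties transfer to the companion form: $f_n\to 0$ in $\LRm$ by the $L^2$-norm equivalence, and $\{f_n\}$ is $\EWRD$-Cauchy by the energy equivalence. The closability of $(\EWRD,C_0^{\infty}(\SSSR))$ on $\LRm$ then forces $\EWRD(f_n,f_n)\to 0$, and one more application of the energy comparison yields $\ERxim(f_n,f_n)\to 0$, which is precisely what closability requires.

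The argument is essentially routine bookkeeping once \lref{l:31} and \lref{l:32}(1) are available, and I do not expect any genuine obstacle here. All of the substantive analytic content---namely, controlling $\mmm$ near the coincidence hyperplanes $\{x^i=x^j\}$ via the Vandermonde factor $\Deltam$, and establishing closability of the model form with the explicit weight $\Deltam$---has already been discharged in those two lemmas, which rest respectively on the quasi-Gibbs property of $\mu$ and on the continuity of $\Deltam$.
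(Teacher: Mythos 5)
Your proposal is correct and follows the same route as the paper: the paper's proof consists precisely of the two comparison inequalities $\cref{;A1}^{-1}\EWRD(f,f)\le\ERxim(f,f)\le\cref{;A1}\EWRD(f,f)$ and $\cref{;A1}^{-1}(f,f)_{\LRm}\le(f,f)_{\Lmmm}\le\cref{;A1}(f,f)_{\LRm}$ obtained from Eq.\,\eqref{:A1b}, followed by an appeal to \lref{l:32}\,\thetag{1}. Your explicit verification of the closability criterion via a null sequence that is energy-Cauchy is just the spelled-out version of the paper's final sentence.
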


\begin{proof}
Using Eq.\,\eqref{:A1b}, we deduce 
\begin{align} &\notag 
\cref{;A1}^{-1} \EWRD (f,f)\le 
 \ERxim (f,f) \le
\cref{;A1} \EWRD (f,f)
,\\ &\notag 
 \cref{;A1}^{-1} (f,f)_{\LRm }
 \le 
(f,f)_{\Lmmm }
 \le 
\cref{;A1} (f,f)_{\LRm } 
.\end{align}
Hence, \lref{l:34} follows from \lref{l:32} \thetag{1}. 
\end{proof}

Let $ (\ERxim , \domlRxim ) $ be the closure of 
$ (\ERxim , C_0^{\infty}(\SSSR ))$ on $ \Lmmm $. 
Let $ \TRxim (t)$ be the associated $ L^2$-semi-group on 
$ \Lmmm $.

\begin{theorem}	\label{l:35}
\thetag{1} 
Suppose that $ f \in \domlRxim $ satisfies $ \ERxim (f,f) = 0 $. 
Then, $ f $ is constant $ \mu $-a.s. 
\\\thetag{2} 
Suppose that $ f \in \Lm $ satisfies $ \TRxim (t)f = f $ for all $ t $. 
Then, $ f $ is constant $ \mu $-a.s. 
\end{theorem}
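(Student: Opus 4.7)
The plan is to transfer the ergodicity of the reference Dirichlet form $(\EWRD, \domWR)$ on $\LRm$, established in \lref{l:33}, to $(\ERxim, \domlRxim)$ on $\Lmmm$ via the two-sided comparison supplied by \lref{l:31}.

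First, I would record that the bounds in \eqref{:A1b}, which were already used in the proof of \lref{l:34}, give the norm equivalences
\begin{align*}
\cref{;A1}^{-1} \EWRD (f,f) \le \ERxim (f,f) \le \cref{;A1}\, \EWRD (f,f),
\qquad
\cref{;A1}^{-1} \|f\|_{\LRm}^2 \le \|f\|_{\Lmmm}^2 \le \cref{;A1}\, \|f\|_{\LRm}^2
\end{align*}
on $C_0^{\infty}(\SSSR)$. Since both forms are closures of the same pre-domain $C_0^{\infty}(\SSSR)$ under equivalent graph norms, the closures coincide as sets: $\domlRxim = \domWR$, and a Cauchy sequence for one form is Cauchy for the other with the same limit. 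In particular, $\ERxim(f,f)=0$ if and only if $\EWRD(f,f)=0$.

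For part \thetag{1}, suppose $f \in \domlRxim$ with $\ERxim(f,f)=0$. By the equivalence, $f\in \domWR$ and $\EWRD(f,f)=0$, so \lref{l:33}\thetag{2} yields that $f$ is constant Lebesgue-a.e.\ on $\SSSR$. The lower bound in \eqref{:A1b} together with $\Deltam(\x)>0$ off the diagonal shows that $\mmm d\x$ is equivalent to Lebesgue measure on $\SSSR$, so $f$ is constant $\muRxim$-a.s., which is the claimed $\mu$-a.s.\ statement after conditioning. Part \thetag{2} then follows from \thetag{1} by the standard spectral/Dirichlet-form argument: if $\TRxim(t)f=f$ for all $t\ge 0$, then $f$ lies in the domain of the generator $L$ with $Lf=0$, hence $f\in\domlRxim$ with $\ERxim(f,f)=-(Lf,f)_{\Lmmm}=0$, and \thetag{1} applies.

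The main obstacle is essentially already overcome by \lref{l:32}\thetag{2}, which established strict positivity of the transition density $\pWR$ on the connected open set $\SSSR$; everything else reduces to bookkeeping with the bilipschitz comparison. The only small subtlety I would be careful about is to confirm that the equivalence of norms persists through the closure operation—i.e., that an element of $\domlRxim$ admits an approximating sequence in $C_0^{\infty}(\SSSR)$ that is simultaneously Cauchy in both graph norms—which is immediate because the two graph norms are pointwise equivalent on the common pre-domain.
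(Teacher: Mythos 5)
Your proposal is correct and takes essentially the same route as the paper: part \thetag{1} is exactly the paper's argument, transferring $ \ERxim (f,f)=0 $ to $ \EWRD (f,f)=0 $ through the two-sided bounds \eqref{:A1b} underlying \lref{l:34} and then invoking \lref{l:33} \thetag{2}, with the equivalence of the measures giving the a.s.\ statement. For part \thetag{2} the paper uses the identity $ (f,f)_{\Lmmm }-(\TRxim (t)f,\TRxim (t)f)_{\Lmmm }=\int_0^t \ERxim (\TRxim (u)f,\TRxim (u)f)\,du $ and lets $ u \to 0 $, while you pass through the generator ($ Lf=0 $, hence $ \ERxim (f,f)=-(Lf,f)_{\Lmmm }=0 $); these are equivalent standard semigroup arguments, so the two proofs coincide in substance.
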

\begin{proof}
Statement \thetag{1} follows from \lref{l:34} and \lref{l:33} \thetag{2}. 
Suppose that $ \TRxim (t)f = f $ for all $ t $. Then, using the relation 
\begin{align} &\notag 
(f,f)_{\Lmmm } - ( \TRxim (t)f, \TRxim (t)f)_{\Lmmm } 
\\ \notag & = 
\int_0^t \ERxim ( \TRxim (u)f, \TRxim (u)f) du 
,\end{align}
we obtain 
\begin{align*}&
\text{$ \ERxim ( \TRxim (u)f , \TRxim (u)f) = 0 $ for a.e.\,$ u \ge 0$}
.\end{align*}
Hence, $ \TRxim (u)f $ is a constant function for a.e.\,$ u $. 
Taking $ u \to 0 $, we see that $ \TRxim (u)f $ converges strongly to $ f $ in $ \Lmmm $. 
Hence, we deduce that $ f $ is a constant function. 
This proves \thetag{2}. 
\end{proof}

\section{Proof of \tref{l:12}. } \label{s:4}

Let $ \muRxim $ be as in \eqref{:A1a}. 
It is clear that $ \mu $ has a disintegration such that 
\begin{align}\label{:42x}&
\muR ^{-1} (\cdot )= \sum_{m=0}^{\infty}\mu (\SSRm ) \int_{\sSS } \muRxim (\cdot ) \mu (d \xi )
.\end{align}
Let $ \DDDR $ be the carr\'{e} du champ on $ \sSS $ such that 
\begin{align}\label{:42y} &
\DDDR [f,g] (\sss )= \half \sum_{\si \in \SR }
\PD{\f }{\si }(\mathbf{s}) \PD{\g }{\si }(\mathbf{s})
.\end{align}
Here, for $ f \in \di $, the function $ \f $ is symmetric in $ \mathbf{s}$ 
such that $ \f (\mathbf{s}) = f (\sss )$ 
for $ \mathbf{s} = (\si )_i $ and $ \sss = \sum_i \delta_{\si }$. 
Note that the right-hand side can be regarded as function in $ \sss $ because it is a symmetric function in $ \mathbf{s}$. 
We set 
\begin{align}\label{:42z}&
 \1 (f,g) = \int_{\sSS } \DDDR [f,g] (\sss )\muRxim (d\sss )
,\\ \notag 
& \dom \Rxim = \{ f \in \di \, ;\, \1 (f,f) < \infty,\, f \in L^2 (\sSS , \muRxim ) \} 
.\end{align}

From \eqref{:A10b}, \eqref{:32q}, \eqref{:34z}, \eqref{:42y}, and \eqref{:42z}, we see that 
$ \ERxim $ and $ \1 $ are isometric in the sense that for $ f \in \di $
\begin{align}\label{:42a}&
\ERxim (\fRxi , \fRxi ) = \1  (f,f) \quad \text{ for $ \mu $-a.s.\,$ \xi $}
.\end{align}
Here, for a function $ f $ on $ \sSS $, we set $ \fRxi (\x ) = f (\sss ) $ and 
$ \ulab (\x ) + \piRc (\xi ) = \sss $, where 
$ \ulab (\x ) = \sum_{i=1}^m \delta_{x^i} $ for $ \x = (x^i)_{i=1}^m$ as before. 

Using \lref{l:34} and \eqref{:42a}, we deduce that 
$ (\1 , \dom \Rxim )$ is closable on $ L^2 (\sSS , \muRxim )$. 
We denote the closure of $ ( \1 , \dom \Rxim )$ on $ L^2 (\sSS , \muRxim )$ by 
$ ( \1 , \dlRxim ) $. 

Let 
\begin{align}\notag 
& \3 (f,g) = \int_{\sSS } \DDDR [f,g] d\mu 
,\\ \notag 
& \diR = \{ f \in \di \, ;\, \ER (f,f) < \infty,\, f \in \Lm \} 
.\end{align}
We quote a result from \cite{o.rm}. 
\begin{lemma}[\cite{o.rm}] \label{l:41}
$ (\3 , \diR )$ is closable on $ \Lm $. 
\end{lemma}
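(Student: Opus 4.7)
The plan is to derive closability by disintegrating $\mu$ along the outer configuration $\piRc$ and invoking the closability of the fiberwise forms already at hand. First I would use \eqref{:42x} together with the observation that $\DDDR[f,g](\sss)$ depends on $\sss$ only through its restriction to $\SR$ to obtain, for all $f, g \in \diR$,
\begin{align*}
\3(f, g) &= \sum_{m=0}^{\infty} \mu(\SSRm) \int_{\sSS} \1(f, g) \, \mu(d\xi), \\
(f, g)_{\Lm} &= \sum_{m=0}^{\infty} \mu(\SSRm) \int_{\sSS} (f, g)_{L^2(\muRxim)} \, \mu(d\xi).
\end{align*}
This exhibits $(\3, \diR)$ as a direct-integral superposition of the fiberwise forms $(\1, \dom\Rxim)$, which are closable on $L^2(\sSS, \muRxim)$ for each $m$ and $\mu$-a.e.\,$\xi$ by \lref{l:34} and the isometry \eqref{:42a}.

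Next, I would verify the closability criterion directly. Suppose $(f_n) \subset \diR$ satisfies $f_n \to 0$ in $\Lm$ and $\3(f_n - f_k, f_n - f_k) \to 0$. Extract a subsequence (still denoted $(f_n)$) with $\3(f_n - f_{n+1}, f_n - f_{n+1}) + \|f_n - f_{n+1}\|_{\Lm}^2 \le 4^{-n}$. Fubini applied to the two identities above, combined with Borel--Cantelli, shows that for each $m$ and $\mu$-a.e.\,$\xi$, the sequence $(f_n)$ is both $\1$-Cauchy and $L^2(\muRxim)$-null. The fiberwise closability then forces $\1(f_n, f_n) \to 0$ for such $m$ and $\xi$.

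To conclude $\3(f_n, f_n) \to 0$, I would realize the abstract completion of $(\diR, \3)$ as isometrically embedded in the direct integral $\hat{H} := \sum_m \mu(\SSRm) \int^{\oplus} \hat{H}^m_\xi \, \mu(d\xi)$, where $\hat{H}^m_\xi$ denotes the Hilbert space completion of $\dom\Rxim$ modulo the null-space of $\1$ under $f \mapsto \1(f, f)^{1/2}$. The sequence $(f_n)$, being $\3$-Cauchy, has a well-defined limit in $\hat{H}$ whose fiber components coincide with the $\1$-limits of $(f_n)$; the previous paragraph identifies each such component with zero, and the isometric embedding transfers this to $\3(f_n, f_n) \to 0$. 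The main technical obstacle I anticipate is the measurability bookkeeping required to assemble $\{\hat{H}^m_\xi\}$ into a measurable field of Hilbert spaces and to verify that the superposition map into $\hat{H}$ is indeed isometric; this should be routine given the Borel nature of the disintegration \eqref{:42x} and that $(\1, \dom\Rxim)$ is built from a fixed measurable predomain, but the argument must be handled with care uniformly in $m$ and $\xi$.
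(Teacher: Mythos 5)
Your proposal is essentially correct, but it takes a different route from the paper in the sense that the paper does not prove this lemma internally at all: its proof is a one-line citation to Lemmas 3.3--3.5 and Theorem 2.2 of \cite{o.rm}, where closability is obtained from the quasi-Gibbs density estimate (quoted here as \lref{l:31}) by the same superposition mechanism you reconstruct. What your argument buys is a self-contained proof from ingredients already displayed in Sections 3--4: the disintegration \eqref{:42x}, the isometry \eqref{:42a}, and the closability of $ (\1 , \dom \Rxim ) $ on $ L^2(\sSS , \muRxim ) $ deduced from \lref{l:34}. Your Borel--Cantelli step is the right move and is genuinely needed: mere $\mu$-a.e.\ summability of $ \1 (f_n - f_{n+1}, f_n - f_{n+1}) $ would not give fiberwise $\1$-Cauchyness (one needs summability of the square roots), but Markov's inequality applied to the events $ \{ \1 (f_n - f_{n+1}, f_n - f_{n+1}) > 2^{-n} \} $ yields probabilities of order $ 2^{-n} $ for each fixed $ m $ with $ \mu (\SSRm ) > 0 $, so a.e.\ eventually $ \1 (f_n - f_{n+1}, f_n - f_{n+1}) \le 2^{-n} $ and the increments are square-root summable; fiberwise closability then gives $ \1 (f_n , f_n ) \to 0 $ for $\mu$-a.e.\,$\xi$. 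Two caveats. First, your parenthetical claim that $ \DDDR [f,g](\sss ) $ depends on $ \sss $ only through its restriction to $ \SR $ is unnecessary and not literally true for local functions whose localization set exceeds $ \SR $; the two identities you need follow from the disintegration \eqref{:42x} alone, applied to the integrands $ \DDDR [f,g] $ and $ |f|^2 $. Second, the direct-integral-of-completions construction in your last step, with the measurable-field bookkeeping you flag as the main obstacle, can be dispensed with entirely: fiberwise, $ \1 (f_n , f_n ) \le \liminf_{j} \1 (f_n - f_j , f_n - f_j ) $ because $ \1 (f_j , f_j ) \to 0 $, and integrating this with Fatou's lemma gives $ \3 (f_n , f_n ) \le \liminf_{j} \3 (f_n - f_j , f_n - f_j ) \le \sup_{j \ge n } \3 (f_n - f_j , f_n - f_j ) \to 0 $ as $ n \to \infty $, which closes the proof without constructing $ \hat{H} $.
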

\begin{proof}
\lref{l:41} follows from Lemmas 3.3--3.5 and Theorem 2.2 in \cite{o.rm}. 
\end{proof}

Let $ (\ERl , \dRl ) $ be the closure of $ (\3 , \diR )$ on $ \Lm $. 
Using Eqs.\,\eqref{:42x}, \eqref{:42y}, and \eqref{:42z}, we deduce 
\begin{align}\label{:42w}&
\ERl = \sum_{m=0}^{\infty} \mu (\SSRm )
\int_{\sSS } \1 \mu (d\xi ) 
,\quad %\\ \notag &
\dRl \subset \bigcup_{m=0}^{\infty} \Big\{ \bigcap_{\mu \text{-a.s.}\, \xi } \dlRxim \Big\} 
.\end{align}

We recall the concept of number rigidity defined in \cite{ghosh.2015}. 
We say that a random point field $ \nu $ is number rigid if, for any relatively compact set $ A $, it holds that $ \sss ( A ) $ is non-random under $ \nu (\cdot | \pi_{A^c}(\xi ) )$ 
for $ \nu \circ \pi_{A^c}$-a.s.\,$ \sss $, where 
$ \pi_{A^c} (\sss ) = \sss (\cdot \cap A^c  )$. 

This means that the number of particles inside $ A $ is uniquely determined by information outside $ A $ for $ \nu $-a.s.\,$ \sss $. 
% This property simplifies the proof of ergodicity. 
(See \cite{buf.2016} for a historical remark on rigidity.) 
We quote the following result from \cite{ghosh.2015}. 
\begin{lemma}[\cite{ghosh.2015}] \label{l:42}
The sine$ _2$ random point $ \mu $ is number rigid. 
\end{lemma}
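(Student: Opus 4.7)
The plan is to follow Ghosh's variance-of-linear-statistics strategy. Fix a bounded Borel set $A \subset \R$; by inner regularity one may assume $A = [-a,a]$. The goal is to prove that $\sss(A)$ is $\sigma(\pi_{A^c})$-measurable under $\mu$. The technical heart of the argument will be the construction of a sequence $\varphi_n : \R \to [0,1]$ with $\varphi_n \equiv 1$ on $A$, $\varphi_n$ supported in $B_n := [-n,n]$, and such that
\begin{equation*}
\mathrm{Var}_\mu\Bigl(\sum_i \varphi_n(\si)\Bigr) \longrightarrow 0 \qquad (n \to \infty).
\end{equation*}

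Granted such a sequence, the decomposition
\begin{equation*}
\sss(A) \ = \ \sum_i \varphi_n(\si) \ - \ \sum_{\si \in B_n \setminus A} \varphi_n(\si)
\end{equation*}
exhibits $\sss(A)$ as a $\sigma(\pi_{A^c})$-measurable variable plus a linear statistic whose $L^2(\mu)$-deviation from its (deterministic) mean tends to $0$. Passing to an a.s.\ convergent subsequence via Chebyshev, and using that $\sss(A)$ is $\Z_{\ge 0}$-valued and therefore cannot differ from its $\sigma(\pi_{A^c})$-conditional expectation by a non-integer amount $\mu$-a.s., one concludes that $\sss(A)$ itself is $\sigma(\pi_{A^c})$-measurable; this is precisely number rigidity.

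It remains to produce the sequence $\varphi_n$ with vanishing variance. For a determinantal random point field with kernel $\KK$, the variance of a linear statistic admits the explicit formula
\begin{equation*}
\mathrm{Var}_\mu\Bigl(\sum_i \varphi(\si)\Bigr) \ = \ \tfrac{1}{2}\iint_{\R^2}(\varphi(x)-\varphi(y))^2 \, \KK(x,y)^2 \, dx\, dy,
\end{equation*}
and for the sine kernel \eqref{:10f}, Plancherel (using that $\KK(0,\cdot)$ has Fourier transform proportional to $\mathbf{1}_{[-\rho,\rho]}$) turns this into a weighted $L^2$-norm of $\hat\varphi$ with support in $[-\rho,\rho]$. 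Taking $\varphi_n$ equal to $1$ on $A$, vanishing outside $B_n$, and interpolating through a logarithmic-type plateau profile across $B_n \setminus A$ yields a bound of order $1/\log n$, which is compatible with---and indeed a quantitative refinement of---the logarithmic variance growth of $\sss([-\rR,\rR])$ invoked after \eqref{:20b}. The principal obstacle is exactly this Fourier-analytic estimate: one must match the transition shape of $\varphi_n$ to the intrinsic logarithmic fluctuation scale of the sine$_2$ process, so that the slow decay of the sine kernel does not obstruct the variance from tending to zero. This delicate computation is the content of \cite{ghosh.2015}, and I would reproduce it by following Ghosh's explicit construction of such plateau test functions.
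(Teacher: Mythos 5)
The paper offers no proof of Lemma \ref{l:42}: it is imported verbatim from \cite{ghosh.2015}, so there is no internal argument to compare yours against. Your outline is precisely the variance-of-linear-statistics strategy of that reference, and it is sound: the reduction of number rigidity to producing $[0,1]$-valued $\varphi_n \equiv 1$ on $A$, supported in $[-n,n]$, with $\mathrm{Var}_\mu\bigl(\sum_i \varphi_n(\si)\bigr)\to 0$, together with the a.s.-subsequence argument showing that $\sss (A)$ is then $\sigma(\pi_{A^c})$-measurable, is correct (the integer-valuedness remark is not even needed), and the one substantive ingredient --- the $O(1/\log n)$ variance bound for logarithmic plateau profiles, which uses the projection property of the sine kernel --- you explicitly defer to Ghosh. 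One wording slip worth fixing: since $\varphi_n$ is compactly supported, $\hat\varphi_n$ cannot be supported in $[-\rho,\rho]$; the Plancherel computation instead gives $\mathrm{Var}_\mu\bigl(\sum_i\varphi(\si)\bigr)=\int_{\R}|\hat\varphi(\xi)|^2\bigl(\hat G(0)-\hat G(\xi)\bigr)\,d\xi$ with $G(u)=\KK(u,0)^2$, whose Fourier transform is supported in $[-2\rho,2\rho]$, so the weight is $\asymp \min(|\xi|,2\rho)$; it is this linear vanishing of the weight at $\xi=0$ that makes the $1/\log n$ estimate, and hence rigidity, possible.
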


Because of this property of number rigidity, 
we can find a unique $ \mxi \in \N $ for each $ \rR $ and $ \mu $-a.s.\,$ \xi $ such that 
$\mxi =  \sss (\SR ) $ for $ \mu _{\Rxi } $-a.s.\,$ \sss $, 
where $ \mu _{\Rxi }= \mu (\piR (\sss ) \in \cdot | \piRc (\sss ) = \piRc (\xi ))$ is the regular conditional probability. Hence, we have 
\begin{align}\label{:43v}&
\mu = \int_{\sSS } \muRximxi \mu (d \xi )
.\end{align}
Furthermore, $ \mxi $ is $ \sigma [\piRc ] $-measurable for each $ \rR \in \N $. 
Using Eq.\,\eqref{:43v}, we reduce Eq.\,\eqref{:42w} to the following. 
%have the following representation of $ (\ERl , \dRl ) $. 
% For $ \mu $-a.s.\,$ \xi $, 
\begin{align}\label{:43w}&
\ERl = \int_{\sSS } \El _{\Rxi }^{\mxi } \mu (d\xi )
,\quad % \\ \notag &
\dRl \subset \bigcap _{\text{$ \mu $-a.s.\,$ \xi $}} \domlRxi ^{\mxi } 
.\end{align}
Note that $ \doml \subset \dRl \subset \domlRxi ^{\mxi } $ and that 
$ \El _{\Rxi }^{\mxi } (f,f)$ is $ \sigma [\piRc ] $-measurable for each $ f \in \doml $.

For the Dirichlet forms $ (\E ^1, \dom ^1 )$ and $ (\E ^2, \dom ^2 )$, we write 
$ (\E ^1, \dom ^1 ) \le (\E ^2, \dom ^2 ) $ if 
$ \E ^1 (f,f) \le \E ^2 (f,f) $ for all $f \in \dom ^2$ and $ \dom ^1 \supset \dom ^2 $. 
We say that $ \{ (\E ^i, \dom ^i) \}_{i} $ is increasing if 
$ (\E ^i, \dom ^i) \le (\E ^{i+1}, \dom ^{i+1})$ for all $ i $. 
We now quote a result from \cite{o.dfa}. 
\begin{lemma}[{\cite[Lemma 2.2]{o.dfa}}] \label{l:43}
\thetag{1} $ \{ (\ERl , \dRl ) \}_{\rR \in \N } $ is increasing. 
\\\thetag{2} Let 
$ (\El , \doml ) $ be the increasing limit of $ \{ (\ERl , \dRl ) _{\rR \in \N } $, 
where 
\begin{align*}&
\doml = \{ f \in \bigcap_{\rR = 1}^{\infty} \dRl 
\, ;\, \limR \ERl (f,f) < \infty \} 
.\end{align*}
Then, $ (\El , \doml ) $ is the closed form on $ \Lm $. 
\end{lemma}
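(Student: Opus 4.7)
The approach is to first establish the pointwise monotonicity of the local carré du champ $\DDDR $ and then invoke the standard theory of monotone limits of symmetric nonnegative closed quadratic forms on a Hilbert space. For part (1), I would begin with the pointwise observation that, since $\SR \subset \sS _{\rR +1}$, every summand in the definition \eqref{:42y} of $\DDDR $ is nonnegative, so
\begin{align*}
\DDDR [f,f](\sss ) \le \mathbb{D}_{\rR +1}[f,f](\sss ) \quad \text{on } \sSS
\end{align*}
for every $f \in \di $. Integrating against $\mu $ yields $\ER (f,f) \le \mathcal{E}_{\rR +1}(f,f)$ on a common predomain, and in particular $\diR \supset \dom _{\rR +1, \circ }$. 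To lift the inequality to the closures, given $f \in \underline{\dom }_{\rR +1}$ I would choose an approximating sequence $\{ f_n \} \subset \di $ that is Cauchy in the $\mathcal{E}_{\rR +1}$-form norm and converges to $f $ in $\Lm $; the pointwise inequality transfers the Cauchy property to the $\ER $-norm, so $f \in \dRl $ and
\begin{align*}
\ERl (f,f) = \limi{n} \ER (f_n , f_n ) \le \limi{n} \mathcal{E}_{\rR +1}(f_n , f_n ) = \underline{\mathcal{E}}_{\rR +1}(f,f),
\end{align*}
yielding $(\ERl , \dRl ) \le (\underline{\mathcal{E}}_{\rR +1}, \underline{\dom }_{\rR +1})$.

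For part (2), I would define $\El (f,f) := \sup _{\rR } \ERl (f,f) = \limR \ERl (f,f) $ on $\doml $ and then verify closedness directly. Let $\{ f_n \} \subset \doml $ be Cauchy in the graph norm associated with $\El + \| \cdot \| _{\Lm }^2$, with $f_n \to f $ in $\Lm $. The monotonicity in part (1) gives, for each fixed $\rR $,
\begin{align*}
\ERl (f_n - f_m , f_n - f_m ) \le \El (f_n - f_m , f_n - f_m ) \longrightarrow 0
\end{align*}
as $n , m \to \infty $, uniformly in $\rR $. Closedness of each $(\ERl , \dRl )$ then yields $f \in \dRl $ with $\ERl (f_n - f , f_n - f ) \to 0 $ as $n \to \infty $. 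Passing $n \to \infty $ in $\ERl (f_n , f_n ) \le \sup _k \El (f_k , f_k ) < \infty $ and then $\rR \to \infty $ gives $f \in \doml $; the uniform-in-$\rR $ Cauchy bound then delivers the $\El $-norm convergence $f_n \to f $.

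The principal obstacle is the interchange of the limits $n \to \infty $ and $\rR \to \infty $ in part (2): one must exploit the uniformity in $\rR $ of the Cauchy estimate to lift convergence from each finite-volume form $\ERl $ to the limit form $\El $. This is a standard fact about increasing sequences of nonnegative closed symmetric quadratic forms on a Hilbert space, and the detailed verification is precisely \cite[Lemma 2.2]{o.dfa}, which I would invoke for the bookkeeping.
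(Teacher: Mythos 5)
Your proposal is correct and is in effect the same route as the paper's: the paper offers no proof of this lemma, quoting it directly from \cite[Lemma 2.2]{o.dfa}, and your argument --- pointwise monotonicity of the carr\'{e} du champ coming from $ \SR \subset \sS _{\rR +1}$, lifted to the closures via form-Cauchy approximating sequences, followed by the standard monotone-convergence (Kato-type) argument showing that an increasing limit of closed symmetric forms is closed --- is precisely the standard proof underlying that citation. The one step to state explicitly is the passage from $ \ERl (f_n - f_m , f_n - f_m ) < \epsilon $ (uniformly in $ \rR $) to $ \ERl (f_n - f , f_n - f ) \le \epsilon $ via closedness (or lower semicontinuity) of each $ (\ERl , \dRl )$ before taking the supremum over $ \rR $, which your sketch does indicate, so there is no gap.
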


We set $ \TTR = \sigma [\piRc ]$. 
Let $ \TT $ be the tail $ \sigma $-field of $ \sSS $, defined as 
\begin{align} & \notag 
\TT = \bigcap_{\rR = 1}^{\infty} \TTR 
.\end{align}
We quote the following result from \cite{o-o.tail, ly.18,bqs}. 
\begin{lemma} \label{l:44} 
The sine$ _2$ random point field $ \mu $ is tail-trivial. 
That is, $ \mu (\mathfrak{A}) \in \{ 0,1 \} $ for any $ \mathfrak{A}\in \TT $. 
\end{lemma}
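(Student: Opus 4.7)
The plan is to invoke one of the three cited external results, each of which establishes tail triviality for a class of determinantal point processes that contains $\mu$; the work therefore reduces to verifying the relevant hypothesis for the sine kernel.

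First, I would observe that $\mu$ is the determinantal point process on $\R$ with correlation kernel $\KK$ defined in \eqref{:10f}. A direct Fourier-transform computation shows that the integral operator on $L^2(\R, dx)$ with kernel $\KK$ is the orthogonal projection onto the Paley--Wiener space $\{ f \in L^2(\R) : \mathrm{supp}\, \hat f \subset [-\rho, \rho] \}$. In particular, $\KK$ is the integral kernel of a self-adjoint projection operator of infinite rank. This places $\mu$ squarely in the class covered by Lyons' framework.

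Second, I would invoke the theorem of Lyons \cite{ly.18}: every determinantal point process whose correlation kernel is an orthogonal projection on $L^2$ has trivial tail $\sigma$-algebra. Applied to the kernel just identified, this gives $\mu(\mathfrak{A}) \in \{0,1\}$ for every $\mathfrak{A} \in \TT$. Alternative routes to the same conclusion are available from \cite{o-o.tail} and \cite{bqs}; both combine the number rigidity of \lref{l:42} with a direct analysis of the limits of conditional correlation kernels, and either yields the statement.

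The genuine difficulty, were one to attempt a self-contained argument, would lie in controlling the Palm conditioning on the exterior $\SR^c$ as $\rR\to\infty$. The strategy in \cite{ly.18} is to realize the Palm kernel on $\SR$ as a spectral compression of $\KK$ and to show, via a trace-norm approximation, that conditional expectations of cylinder functions against exterior tail events converge to unconditional ones; it is at this step that the projection property of $\KK$ is decisive. Since the required spectral input is supplied by the cited literature, \lref{l:44} is quoted here as a known fact and is not reproven.
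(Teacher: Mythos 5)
Your proposal takes essentially the same route as the paper: Lemma~\ref{l:44} is quoted there as a known result from \cite{o-o.tail,ly.18,bqs} without further argument, and your verification that the sine kernel is an infinite-rank orthogonal projection (onto the Paley--Wiener space of band-limited functions) is precisely the hypothesis under which those references apply. One minor caveat: the proofs in \cite{o-o.tail} and \cite{bqs} proceed via discrete approximation/tree representations and conditional kernels, respectively, and do not use the number rigidity of Lemma~\ref{l:42}, but this side remark does not affect the correctness of your citation-based argument.
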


Let $ \Tlt $ be the $ L^2$-semi-group associated with the Dirichlet form $ (\El , \doml ) $ 
on $ \Lm $. 

\begin{theorem}	\label{l:45} 
\thetag{1} If $ f \in \doml $ and $ \El (f , f ) = 0 $, then $ f $ is constant $ \mu $-a.s. 
\\\thetag{2} 
If $ f \in \Lm $ is such that $ \Tlt f = f $ $ \mu $-a.s.\,for all $ t $, 
then $ f $ is constant $ \mu $-a.s. 
\end{theorem}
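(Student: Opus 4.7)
The plan is to reduce \thetag{2} to \thetag{1} via the standard semigroup energy identity, and to establish \thetag{1} by a fibered decomposition of $ (\El , \doml )$ followed by tail triviality.

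For \thetag{1}, take $ f \in \doml $ with $ \El (f,f) = 0 $. Since $ (\El , \doml )$ is the increasing limit of $ \{ (\ERl , \dRl )\}_{\rR \in \N }$ by \lref{l:43}, one has $ 0 \le \ERl (f,f) \le \El (f,f) = 0 $, hence $ \ERl (f,f) = 0 $ for every $ \rR \in \N $. The integral representation in \eqref{:43w}, which is legitimate because number rigidity (\lref{l:42}) pins down $ \mxi $ from $ \piRc (\xi )$, then yields
\begin{align*}&
 \El _{\Rxi }^{\mxi } (f,f) = 0 \quad \text{ for $ \mu $-a.s.\,$ \xi $.}
\end{align*}
Extending the isometry \eqref{:42a} from $ \di $ to the closure by the standard closability argument, this becomes $ \ERxim (\fRxi , \fRxi ) = 0 $ with $ \m = \mxi $. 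Now \tref{l:35} \thetag{1} forces $ \fRxi $ to be constant on $ \SSSR $, and pulling this back through the labeling bijection shows that $ f $ is constant on each level set of $ \piRc $. In other words, $ f $ is $ \TTR $-measurable $ \mu $-a.s.

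Since the previous conclusion holds for every $ \rR \in \N $, $ f $ is measurable with respect to the tail $ \sigma $-field $ \TT = \bigcap_{\rR \in \N } \TTR $. Tail triviality of $ \mu $ (\lref{l:44}) then forces $ f $ to be $ \mu $-a.s.\,constant, which proves \thetag{1}.

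For \thetag{2}, I would mimic the proof of \tref{l:35} \thetag{2}. Assuming $ \Tlt f = f $ $ \mu $-a.s.\,for all $ t $, the energy identity for the closed form $ (\El , \doml )$
\begin{align*}&
 (f,f)_{\Lm } - ( \Tlt f , \Tlt f )_{\Lm } = \int_0^t \El ( \Tlu f , \Tlu f ) \, du
\end{align*}
forces $ \El ( \Tlu f , \Tlu f ) = 0 $ for a.e.\,$ u \ge 0 $. By part \thetag{1}, $ \Tlu f $ is a constant function for a.e.\,$ u $, and strong $ \Lm $-continuity $ \Tlu f \to f $ as $ u \to 0 $ then yields that $ f $ itself is constant $ \mu $-a.s.

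The main obstacle is the bookkeeping in part \thetag{1}: carefully executing the disintegration $ \mu = \int \muRximxi \mu (d\xi )$ (which requires \lref{l:42} to fix the local particle count $ \mxi $) and justifying that the isometry \eqref{:42a} extends faithfully to elements of the closure, so that the vanishing of $ \ERl (f,f) $ indeed produces fibrewise vanishing of the labeled local form. Once this is set up, the local ergodicity from \tref{l:35} and the tail triviality from \lref{l:44} close the argument cleanly.
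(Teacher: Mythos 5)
Your proposal is correct and follows essentially the same route as the paper's proof: reduce to $ \ERl (f,f)=0 $ via the increasing limit of \lref{l:43}, pass to the fibers through \eqref{:43w} and the isometry \eqref{:42a}, invoke \tref{l:35} to get $ \TTR $-measurability, and conclude by tail triviality (\lref{l:44}), with part \thetag{2} handled by the same semigroup energy identity. Your explicit remark that \eqref{:42a} must be extended from $ \di $ to the closure is a point the paper leaves implicit, but it is the same argument.
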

\begin{proof} 
Let $ f \in \doml $ such that $ \El (f , f ) = 0 $. Then, using 
\lref{l:43}, we deduce that $ f \in \dRl $ and $ \ERl (f,f) = 0 $ for all $ \rR \in \N $. 
Using this and Eq.\,\eqref{:43w}, we see that 
\begin{align}\label{:45a}&
 \El \Rximxi (f,f) = 0 \quad \text{ for $ \mu $-a.s.\,$ \xi $}
.\end{align}
Hence, from Eqs.\,\eqref{:42a} and \eqref{:45a}, we deduce that 
\begin{align}\label{:45b}&
\ERxi ^{\mxi } (\fRxi , \fRxi ) = 0 \quad \text{ for $ \mu $-a.s.\,$ \xi $}
.\end{align}
Here, for a function $ f $ on $ \sSS $, we set $ \fRxi (\x ) = f (\sss ) $ and 
$ \ulab (\x ) + \piRc (\xi ) = \sss $. 

Using \tref{l:35} \thetag{1} and Eq.\,\eqref{:45b}, we deduce that 
$ \fRxi (\x ) = f (\sss ) $ is constant in $ \x $ for $ \mu $-a.s.\,$ \sss $. 
Thus, we see that $ f $ is a $\TTR $-measurable function. 

Taking $ \rR \to \infty $, we deduce that $ f $ is $ \TT $-measurable. 
Hence, we deduce that $ f $ is constant for $ \mu $-a.s.\,$\sss $ because $ \TT $ is $ \mu $-trivial by \lref{l:44}. 
Thus, we have proved \thetag{1}. 

For all $ t \ge 0 $, note that 
\begin{align} &\notag 
(f , f )_{\Lm } - (\Tlt f , \Tlt f ) = \int_0^t \El (\Tlu f , \Tlu f ) du 
.\end{align}
Using this and the assumption, we obtain $ (f , f )_{\Lm } - (\Tlt f , \Tlt f ) = 0 $ for all $ t $. 
Hence, $ \El (\Tlu f , \Tlu f ) = 0 $ for a.e.\,$ u \in (0,\infty )$ with respect to the Lebesgue measure. 
Hence, using \thetag{1}, we deduce that 
$ \Tlu f (\sss ) $ is a constant function in $ \sss $ for a.e.\,$ u \in (0,\infty )$. 
Recall that $ \Tlu f $ converges to $ f $ in $ \Lm $ as $ u \to 0 $. 
Using these facts, we have proved \thetag{2}. 
\end{proof}

\begin{lemma}[{\cite[Section 7.1]{k-o-t.udf}}] \label{l:46}
$ (\El , \doml ) = (\E , \dom )$. 
\end{lemma}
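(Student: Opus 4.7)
The plan is to exhibit both $(\El , \doml )$ and $(\E , \dom )$ as closed extensions on $\Lm $ of a common preform, and then invoke the uniqueness theorem of Kawamoto--Osada--Tanemura \cite[Section 7.1]{k-o-t.udf}. Together with the a priori inequality \eqref{:14a}, this will force equality of the forms and of their domains.

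First I would verify that the two bilinear forms agree on the predomain $ \di ^{\mu } $. Any $ f \in \di $ is local and smooth, so there exists $ R_0 = R_0 (f) $ such that $ f $ depends on $ \sss $ only through $ \piR (\sss ) $ for every $ \rR \ge R_0 $. In particular $ \DDDR [ f , f ] (\sss ) = \mathbb{D}[f,f](\sss ) $ pointwise for such $ \rR $, and integration against $ \mu $ gives $ \3 (f,f) = \E (f,f) $ for all $ \rR \ge R_0 $. Using the increasing-limit representation in \lref{l:43}, we obtain $ \El (f,f) = \limR \ERl (f,f) = \E (f,f) < \infty $ for every $ f \in \di ^{\mu } $. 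Since $ f \in \Lm $ by definition of $ \di ^{\mu } $, we conclude $ \di ^{\mu } \subset \doml $ with $ \El = \E $ on $ \di ^{\mu } $.

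Because $ (\E , \dom ) $ is by construction the $ \Lm $-closure of $ (\E , \di ^{\mu }) $, and because $ (\El , \doml ) $ is a closed extension of the same preform, the standard closability/minimality argument yields $ \dom \subset \doml $ with $ \El |_{\dom } = \E $. Combined with \eqref{:14a} the two forms therefore already coincide on $ \dom $, and only the reverse inclusion $ \doml \subset \dom $ remains to be established.

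This reverse inclusion is the main obstacle and is precisely what \cite[Section 7.1]{k-o-t.udf} provides. Their uniqueness theorem asserts that a quasi-regular closed extension of $ (\E , \di ^{\mu }) $ on $ \Lm $ is uniquely determined under three structural properties of $ \mu $: the quasi-Gibbs property (\lref{l:31}), number rigidity (\lref{l:42}), and tail triviality (\lref{l:44}), all of which hold for the sine$ _2 $ random point field. Quasi-regularity of $ (\El , \doml ) $, needed to apply the theorem, follows from its expression as the monotone increasing limit of the finite-volume forms $ (\ERl , \dRl ) $ in \lref{l:43}, each of which is quasi-regular by construction on the Polish configuration space, together with the preservation of quasi-regularity under such increasing limits. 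Once this is in place, uniqueness forces $ \doml \subset \dom $, completing the identification $ (\El , \doml ) = (\E , \dom ) $.
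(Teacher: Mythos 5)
Your route is, at bottom, the same as the paper's: the paper gives no internal proof of this lemma and simply quotes it from \cite[Section 7.1]{k-o-t.udf}, and your argument too ultimately defers everything to that citation. Your first two paragraphs are correct as far as they go: locality of $ f \in \di $ gives $ \3 (f,f) = \E (f,f) $ for all large $ \rR $, hence $ \di ^{\mu } \subset \doml $ with $ \El = \E $ there, and minimality of the closure yields $ \dom \subset \doml $ with $ \El = \E $ on $ \dom $. But this only reproduces the inequality \eqref{:14a}, which the paper already has; the entire content of the lemma is the reverse inclusion $ \doml \subset \dom $, and there your invocation of the black box is inaccurate in two respects.

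First, the uniqueness theorem of \cite{k-o-t.udf} is not of the form ``any quasi-regular closed extension of $ (\E , \di ^{\mu }) $ is unique under the quasi-Gibbs property, number rigidity, and tail triviality.'' Number rigidity is not among its hypotheses (in the present paper rigidity is used only for the decomposition \eqref{:43v}); what is actually required, besides the quasi-Gibbs property and the tail triviality of $ \mu $, is the existence of the logarithmic derivative together with the IFC condition \cite{k-o-t.ifc} and the strong-solution/pathwise-uniqueness machinery of \cite{o-t.tail}. The theorem concerns the specific sandwiched pair: it identifies the labeled dynamics generated by $ (\El , \doml ) $ and by $ (\E , \dom ) $ with the unique strong solution of ISDE \eqref{:10e} and deduces equality of the semigroups, hence of the forms; it does not assert uniqueness among arbitrary quasi-regular closed extensions of the core. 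Second, quasi-regularity of $ (\El , \doml ) $ does not follow from ``preservation of quasi-regularity under monotone increasing limits'': the increasing limit of closed forms is closed (\lref{l:43}), but quasi-regularity of the limit is not automatic and requires tightness-of-capacity estimates, which in this setting come from the quasi-Gibbs bounds of \cite{o.dfa,o.rm}. So your proposal stands exactly to the extent that it defers, as the paper does, to \cite[Section 7.1]{k-o-t.udf}; the supplementary justifications as you state them would not survive scrutiny and should either be removed or replaced by the precise hypotheses verified there.
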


\noindent {\em Proof of \tref{l:12}. }
We immediately obtain \tref{l:12} from \tref{l:45} and \lref{l:46}. 
\qed

\section{Proof of \tref{l:14}. }\label{s:5}

Let $ \mathscr{I} $ be the set consisting of the invariant probability measures of $ \Tt $. 
\begin{lemma}	\label{l:51}
$ \mu $ is extremal in $ \mathscr{I} $. 
That is, if there exist $ \mu_1 , \mu_2 \in \mathscr{I} $ and constants 
$\alpha_1$ and $ \alpha_2 $ such that 
\begin{align}\label{:51a}&
\mu = \alpha_1 \mu_1 + \alpha_2 \mu_2 , \quad 
%\alpha_1 +\alpha_2 =1, \quad 
0 < \alpha_1 , \alpha_2 < 1 
,\end{align}
then $ \mu = \mu_1= \mu_2 $. 
\end{lemma}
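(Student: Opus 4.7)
The strategy is to reduce the extremality claim to the $L^2$-ergodicity of $\Tt$ already obtained in \tref{l:12} \thetag{2} by the classical Radon--Nikodym argument. Assuming \eqref{:51a}, the relation $\alpha_1 \mu_1 \le \mu$ gives $\mu_1 \ll \mu$ with density $f_1 := d\mu_1/d\mu$ satisfying $0 \le f_1 \le 1/\alpha_1$. In particular $f_1 \in L^{\infty}(\sSS , \mu ) \subset \Lm$ and $\int f_1 \, d\mu = \mu_1 (\sSS ) = 1$.

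The key step is to show that $\Tt f_1 = f_1$ $\mu$-a.s.\,for every $t \ge 0$. Regarding $\Tt $ as the transition semigroup of the $\mu$-reversible diffusion $(\XX , \PPm )$ acting on bounded measurable functions, the hypothesis $\mu_1 \in \mathscr{I}$ reads $\int \Tt g \, d\mu_1 = \int g \, d\mu_1$ for every bounded measurable $g$. Rewriting both integrals against $\mu$ through the density $f_1$ and then invoking the $\mu$-symmetry of $\Tt $ on $\Lm $ gives
\begin{align*}
\int_{\sSS } g \cdot (\Tt f_1 ) \, d\mu
 = \int_{\sSS } (\Tt g) \cdot f_1 \, d\mu
 = \int_{\sSS } \Tt g \, d\mu_1
 = \int_{\sSS } g \, d\mu_1
 = \int_{\sSS } g \cdot f_1 \, d\mu
.\end{align*}
Since $g$ is arbitrary in a dense class of $\Lm$, this forces $\Tt f_1 = f_1$ $\mu$-a.s.

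Then \tref{l:12} \thetag{2} applied to $f_1 \in \Lm $ shows that $f_1$ is constant $\mu$-a.s., and the normalization $\int f_1 \, d\mu = 1$ pins down $f_1 \equiv 1$, so $\mu_1 = \mu$; the same reasoning with $f_2 := d\mu_2/d\mu$ yields $\mu_2 = \mu$. The main subtlety, and the one place to be careful, is definitional: one must check that the $L^2$-semigroup $\Tt $ of \tref{l:12} agrees with the transition kernel of the underlying diffusion against which elements of $\mathscr{I}$ are defined to be invariant, and that its action extends from $\Lm $ to the bounded measurable class in a way compatible with $\mu$-symmetry. This compatibility is guaranteed by the quasi-regularity of $(\E , \dom )$ together with $\mu_1 \ll \mu$ (so $\mu$-null sets are $\mu_1$-null and the ambiguity of $\Tt g$ outside a $\mu$-null set is harmless), after which the computation above is purely mechanical.
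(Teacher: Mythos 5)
Your argument is correct and is essentially the paper's own proof: both deduce $\mu_i \ll \mu$ from \eqref{:51a}, bound the densities by $1/\alpha_i$, show the densities are $\Tt$-invariant, and then invoke \tref{l:12} \thetag{2} plus the normalization $\int m_i \, d\mu = 1$ to conclude $\mu_1 = \mu_2 = \mu$. The only difference is that you spell out the $\mu$-symmetry computation justifying $\Tt f_1 = f_1$, which the paper states without detail.
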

\begin{proof}

From Eq.\,\eqref{:51a}, we have that both $ \mu_1 $ and $ \mu_2 $ are absolutely continuous with respect to $ \mu $. Hence, there exist non-negative functions $ m_i $ such that 
$ d \mu_i = m_i d\mu $, $ i=1,2$. 
Using this and Eq.\,\eqref{:51a}, we deduce 
\begin{align}&\notag 
d\mu = \alpha_1 m_1d\mu + \alpha_2 m_2d\mu 
.\end{align}
This implies that $ 1 = \alpha_1 m_1 + \alpha_2 m_2$. 
Hence, $ 0 \le m_i \le 1/\alpha_i$ for $ i = 1,2 $. 

Using $ \mu_i \in \mathscr{I} $, we have $ \Tt m_i = m_i $ for all $ t $. 
Hence, we deduce $ m_i = 1 $ from \tref{l:12} and 
$ \int_{\sSS } m_i d\mu = 1 $ for $ i=1,2$. 
This implies that $ \mu = \mu_1 = \mu_2 $.
\end{proof}

\noindent {\em Proof of \tref{l:14}. } 
It is well known that $ \mu $ is extremal in $ \mathscr{I} $ if and only if 
$ \PPm $ is ergodic under the shift $ \theta_t $ (see Theorem 3.8 in \cite{rey}). 
\tref{l:14} follows from this and \lref{l:51}. \qed

\section{Data availability statements}
The data that supports the findings of this study are available within the article. 

\section{ Acknowledgment}
{This work was supported by JSPS KAKENHI (Grant Nos JP16H06338, JP20K20885, JP21H04432, and JP18H03672). 
We thank Stuart Jenkinson, PhD, from Edanz (https://jp.edanz.com/ac) for editing a draft of this manuscript.}

{
\small 
\noindent 
Hirofumi Osada\\
Faculty of Mathematics, Kyushu University, \\ Fukuoka 819-0395, Japan. \\
\texttt{osada@math.kyushu-u.ac.jp} 

\bs

\noindent 
Shota Osada 
\\
Institute of Mathematics for Industry, 
Kyushu University, \\ Fukuoka 819-0395, Japan. \\
\texttt{s-osada{@}imi.kyushu-u.ac.jp}

\end{document}